\def\F{{\mathcal F}}
\def\R{{\mathbb R}}
\newtheorem{theorem}{Theorem}[section]
\newtheorem{corollary}[theorem]{Corollary}
\newtheorem{lemma}[theorem]{Lemma}
\theoremstyle{definition}
\newtheorem{definition}[theorem]{Definition}
\newtheorem{assumption}[theorem]{Assumption}
\newtheorem{remark}[theorem]{Remark}
\newtheorem{example}[theorem]{Example}
\renewcommand{\epsilon}{\varepsilon}
\numberwithin{equation}{section}
\title[Sensitivity of multiperiod optimization problems]{Sensitivity of multiperiod optimization problems with respect to the adapted Wasserstein distance}
\author{Daniel Bartl}
\address{\hspace{-0.53cm} Daniel Bartl \newline
Faculty of Mathematics\newline
University of Vienna, Austria\newline
daniel.bartl@univie.ac.at}
\author{Johannes Wiesel}
\address{\hspace{-0.53cm} Johannes Wiesel\newline 
Department of Statistics \newline
Columbia University \newline
johannes.wiesel@columbia.edu}
\date{\today}
\begin{document}

\begin{abstract}
We analyze the effect of small changes in the underlying probabilistic model on the value of multiperiod stochastic optimization problems and optimal stopping problems. 
We work in finite discrete time and measure these changes with the adapted Wasserstein distance. We prove explicit first-order approximations for both problems.
Expected utility maximization is discussed as a special case.
\end{abstract}

\thanks{Keywords: robust multiperiod stochastic optimization, sensitivity analysis, (adapted) Wasserstein distance, optimal stopping\\
Both authors thank  Mathias Beiglb\"ock, Yifan Jiang, and Jan Ob{\l}{\'o}j for helpful discussions and two referees for a careful reading.
DB acknowledges support from  Austrian Science Fund (FWF) through projects ESP-31N and P34743.
JW acknowledges support from NSF grant DMS-2205534.}

\maketitle

\section{Introduction}
Consider a (real-valued) discrete-time stochastic process $X=(X_t)_{t=1}^T$ whose probabilistic behavior is governed by a reference model $P$.
Typically, such a model could describe the evolution of the stochastic process in an idealized probabilistic setting, as is customary in mathematical finance, or it could be derived from historical observations, as is a common assumption in statistics and machine learning. 
In both cases, one expects $P$ to merely \emph{approximate} the true but unknown model.
Consequently, an important question pertains to the effect that (small) misspecifications of $P$ have on quantities of interest in these areas. 
In this note, we analyze this question in two fundamental instances: \emph{optimal stopping} problems and convex \emph{multiperiod stochastic  optimization} problems. 
For simplicity we focus on the latter in this introduction:  consider
\[
v(P):=\inf_{a \text{ admissable control} } E_P[f(X,a(X))],
\]
where $f\colon \mathbb{R}^T\times\mathbb{R}^T\to \mathbb{R}$ is convex in the control variable (i.e.,\ its second argument). 
The admissible controls are the (uniformly bounded)  predictable functions  $a=(a_t)_{t=1}^T$, i.e., $a_t(X)$ only depends on $X_1,\dots,X_{t-1}$.
For concreteness, let us mention that \emph{utility maximization}---an essential problem in mathematical finance---falls into this framework, by setting
\[ f(x,a):= -U\Big( g(x) - \sum_{t=1}^T a_t (x_{t}-x_{t-1}) \Big),\]
where $U\colon \mathbb{R}\to\mathbb{R}$ is a concave utility function, $g\colon\mathbb{R}^T\to\mathbb{R}$ is a payoff function and $X_0\in\mathbb{R}$---see  Example \ref{ex:1} for more details.

\vspace{0.5em}
The question how changes of the model $P$ influence the value of $v(P)$ clearly depends on the chosen distance between models.
In order to answer it in a generic way (i.e., without restricting to parametric models), one first needs to choose a suitable metric on the laws of stochastic processes $\mathcal{P}(\mathbb{R}^T)$.
A crucial observation that has appeared in different contexts and goes back at least to \cite{Al81,Hellwig,Pflug:2010hl,Pflug:2012bf}, is that any metric compatible with weak convergence (and also variants thereof that account for integrability, such as the Wasserstein distance) is too coarse to guarantee continuity of the map $P\mapsto v(P)$ in general.
Roughly put, the reason is that two processes can have very similar laws but completely different filtrations; hence  completely different sets of admissible controls. 
This fact has been rediscovered several times during the last decades, and researchers from different fields have introduced modifications of the weak topology that guarantee such continuity properties of $P\mapsto v(P)$; we refer to \cite{backhoff2020all} for detailed historical references.
Strikingly, all the different modifications of the weak topology turn out to be equivalent to the so-called \emph{weak adapted topology}: this is the coarsest topology that makes multiperiod optimization problems continuous, see \cite{backhoff2020all, bartl2021wasserstein}.

With the choice of topology  settled, the next question pertains to the choice of a suitable distance.
This is already relevant in a one-period framework, where the weak and  weak adapted topologies coincide. Recent research shows, that the Wasserstein distance, which metrizes the weak topology, is (perhaps surprisingly) powerful and versatile.
Analogously, the \textit{adapted Wasserstein distance} $\mathcal{AW}_p$ (see Section \ref{sec:main} for the definition) metrizes the weak adapted topology and \cite{Pflug:2010hl, backhoff2020adapted} show, that the multiperiod optimization problems considered in this note are \emph{Lipschitz-continuous} w.r.t.\ $\mathcal{AW}_p$.
However, the Lipschitz-constants depend on global continuity parameters of $f$ and are thus far from being sharp in general.
Moreover, the exact computation of the (worst case) value of $v(Q)$, where $Q$ is in a neighbourhood of $P$, requires solving an infinite-dimensional optimization problem, which does not admit explicit solutions in general.
Both of these issues already occur in a one-period setting---despite the results of, e.g., \cite{blanchet2019quantifying, Bartl:2019hk}, which relate this  infinite-dimensional optimization problem to a simpler dual problem.
In conclusion, \emph{computing} the error $$\mathcal{E}(r):= \sup_{\mathcal{AW}_p(P,Q)\leq r}v(Q)-v(P)$$ exactly is only possible in a few (arguably degenerate) cases.\\

In this note we address both issues by extending ideas of \cite{bartl2021sensitivity,obloj2021distributionally} from a one-period setting to a multiperiod setting.
The key insight of these works is that in a one-period setting, computing \emph{first-order approximations} for $\mathcal{E}(r)$ is virtually always possible, while obtaining exact expressions  might be infeasible in many cases.
Our results go hand in hand with those of \cite{bartl2021sensitivity}, and we obtain explicit closed-form solutions for $\mathcal{E}'(0)$, which have intuitive interpretations.
For instance, we  show in Theorem \ref{thm:sensitivity_opt}, that under mild integrability and differentiability assumptions 
\begin{align*}
\sup_{Q \, : \, \mathcal{AW}_p(P,Q)\leq r} v(Q)
 &=  v(P)
  + r\cdot \Big( \sum_{t=1}^T E_P\big[ \big| E_P \big[ \partial_{x_t} f(X,a^\ast(X)) \big| \mathcal{F}_t^X\big]\big|^{\frac{p}{p-1}} \big] \Big)^{\frac{p-1}{p}}  +o(r)
\end{align*}
holds as $r\downarrow 0$, where $\partial_{x_t} f(x,a)$ is the partial derivative with respect to the $t$-th coordinate of $x$, $\mathcal{F}^X_t=\sigma(X_1,\dots,X_t)$, $a^\ast$ is the unique optimizer for $v(P)$ and  $o$ denotes the Landau symbol.
In the case of utility maximization with $p=q=2$ (and $g\equiv 0$ for simplicity), the first-order correction term is essentially the expected quadratic variation of $a^\ast$, but not under $P$, but distorted by the $l^2$-distance of the conditioned Radon-Nykodym density of an equivalent martingale measure w.r.t.\ $P$---see Example \ref{ex:1} for details.
\\

Investigating  robustness of optimization problems in varying formulations is a recurring theme in the optimization literature; we refer to \cite{rahimian2019distributionally} and the references therein for an overview. 
In the context of mathematical finance, representing distributional uncertainty through Wasserstein neighbourhoods  goes back (at least) to  \cite{pflug2007ambiguity} and has seen a spike in recent research activity, leading to many impressive developments, see, e.g., duality results in \cite{gao2016distributionally, blanchet2019quantifying, kuhn2019wasserstein, Bartl:2019hk} and applications in mathematical finance \cite{blanchet2021distributionally}, machine learning and statistics \cite{shafieezadeh2019regularization, blanchet2020machine}.
Our theoretical results are directly linked to \cite{acciaio2022convergence,backhoff2022estimating} characterizing the speed of convergence between the true and the (modified) empirical measure in the adapted Wasserstein distance and to new developments for computationally efficient relaxations of optimal transport problems, see \cite{ eckstein2022computational}.
For completeness, we mention that other notions of distance have been used to model distributional uncertainty, see e.g., \cite{lam2016robust, lam2018sensitivity, calafiore2007ambiguous} in the context of operations research, \cite{huber2011robust, lindsay1994efficiency} in the context of statistics, and  \cite{herrmann2017model, hobson1998volatility, karoui1998robustness} in the context of mathematical finance.

\section{Main results}\label{sec:main}

\subsection{Preliminaries}

We start by setting up notation.
Let $ T\in\mathbb{N}$, let $\mathbb{R}^T$ be the path space of a stochastic process in finite discrete time, and let $\mathcal{P}_p(\R^T)$ denote the set of all Borel-probability measures on $\R^T$ with finite $p$-th moment.
Throughout this article, $X\colon\R^T\to\R^T$ is the identity (i.e., the canonical process) and $X,Y\colon\R^T\times\R^T\to\R^T$ denote the projections to the first and second coordinate, respectively.
The filtration generated by $X$ is denoted by $(\mathcal{F}^X_t)_{t=0}^T$, i.e., $\mathcal{F}_t^X:=\sigma(X_s : s\leq t)$ and  $\F_0^X:=\{\emptyset, \R^T\}$.
Sometimes we write $(\mathcal{F}^{X,Y}_t)_{t=1}^T$ for the filtration generated by the processes $(X_t,Y_t)_{t=1}^T$.

For a function $f\colon\R^T\times\R^T\to\R$ we write $\partial_{x_t}f$ for the partial derivative of $f$ in $t$-th coordinate of $x$, that is, $$\partial_{x_t}f(x,a)=\lim_{\varepsilon\downarrow0} \frac{1}{\varepsilon}(f(x+\varepsilon e_t,a)-f(x,a))$$ where $e_t$ is the $t$-th unit vector;   $\nabla_a f$ for the gradient in $a$, and $\nabla_a^2 f$ for the Hessian in $a$.
We adopt the same notation for functions $f\colon\R^T\to\R$ or $f\colon\R^T\times\{1,\dots,T\}\to\R$ and write $\partial_{x_t}f$ for the partial derivative of $f$ in $t$-th coordinate of $x\in\R^T$.
For univariate functions $\ell:\R\to \R$ we simply write $\ell', \ell''$ for the first and second derivatives.

\begin{definition}%
	Let $P,Q\in\mathcal{P}_p(\R^T)$.
	A Borel probability measure $\pi$ on $\R^T\times\R^T$ is called  \emph{coupling} (between $P$ and $Q$), if its first marginal distribution is $P$ and its second one is $Q$.
	A coupling $\pi$ is called \emph{causal} if 
	\begin{align}
	\label{eq:causal}
	\pi( (Y_1,\dots, Y_t)\in A \, |\,  X_1,\dots,X_T)
	= \pi( (Y_1,\dots, Y_t)\in A \,  |\,  X_1,\dots,X_t)
	\end{align}
	$\pi$-almost surely for all Borel sets $A\subseteq \R^t$ and all $1\leq t\leq T$; a casual coupling is called \emph{bicausal} if \eqref{eq:causal} holds also with the roles of $X$ and $Y$ reversed.
\end{definition}

Phrased differently, \eqref{eq:causal} means that under $\pi$, conditionally on the `past' $X_1,\dots,X_t$, the `future' $X_{t+1},\dots,X_T$ is independent of $Y_1,\dots,Y_t$; see e.g.\ \cite[Lemma 2.2]{bartl2021wasserstein} for this and further equivalent characterizations of (bi-)causality.
It is also instructive to analyze condition \eqref{eq:causal} in the case of a Monge-coupling, i.e., when  there is a transport map  $\psi\colon\R^T\to\R^T$ such that $Y=\psi(X) =(\psi_t(X))_{t=1}^T$ $\pi$-almost surely. 
Indeed, then \eqref{eq:causal} simply means that $\psi_t$ needs to be $\mathcal{F}^X_t$-measurable.

Fix $p\in(1,\infty)$ and define the \emph{adapted Wasserstein distance} on $\mathcal{P}_p(\R^T)$ by
\begin{align}
\label{eq:def.AW}
\mathcal{AW}_p(P,Q):=\inf_{\pi} \Big( \sum_{t=1}^T E_\pi [   |X_t-Y_t|^p ]\Big)^{1/p}, 
\end{align}
where the infimum is taken over all bicausal couplings $\pi$ between $P$ and $Q$. %
Set
\[ B_r(P):=\{ Q \in\mathcal{P}_p(\R^T) :  \mathcal{AW}_p(P,Q)\le r\} \]
for $r\geq 0$, and denote by $q:=p/(p-1)$ the conjugate H\"older exponent of $p$. %

\subsection{The uncontrolled case}
We are now in a position to state the main results of the paper. We start with a simplified case, where $f$ depends on $X$ only and there are no controls. The sensitivities of the stochastic optimization and optimal stopping problems in Section \ref{sec:con} and \ref{sec:sto} respectively can be seen as natural extensions of this result; indeed the sensitivity computed in Theorem \ref{thm:main} already exhibits the structure, which is common to all our results presented here.

\begin{theorem}\label{thm:main}
	Let $f\colon\R^T\to\R$ be continuously differentiable and assume that there exists $c>0$ such that 
	\[\sum_{s=1}^T |\partial_{x_s} f(x)|
	\le c \Big(1+\sum_{s=1}^T |x_s|^{p-1} \Big)
	\]
	for every $x\in \R^T$.
	Then, as $r\downarrow 0$,
\begin{align*}
	\sup_{Q\in B_r(P)} E_Q[f(X)] 
	= E_P[f(X)]+ r\cdot \Big( \sum_{t=1}^T E_P\Big[ \big| E_P[ \partial_{x_t} f(X) | \mathcal{F}^X_t ] \big|^q \Big]\Big)^{1/q} + o(r).
\end{align*}
\end{theorem}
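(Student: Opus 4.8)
The plan is to prove matching lower and upper bounds for $\sup_{Q\in B_r(P)}E_Q[f(X)]$ with leading term linear in $r$ and coefficient
\[
C := \Big( \sum_{t=1}^T E_P\big[ \big| E_P[\partial_{x_t} f(X)\mid \mathcal{F}^X_t]\big|^q\big]\Big)^{1/q}.
\]
For the \emph{lower bound} I would construct an explicit family of competitor measures $Q_r$ reachable from $P$ via a bicausal (in fact Monge) coupling of cost at most $r$. The natural choice is $Q_r := (\mathrm{id} + r\,h)_\# P$ where $h=(h_t)_{t=1}^T$ is a predictable perturbation, i.e.\ each $h_t$ is $\mathcal{F}^X_t$-measurable; such a map is automatically $\mathcal{F}^X_t$-measurable in its $t$-th coordinate, so by the Monge-coupling remark after the definition the induced coupling is causal, and one checks it is bicausal (or, to be safe, one symmetrizes/uses a slightly perturbed map). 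The transport cost is $\big(\sum_t r^p E_P[|h_t|^p]\big)^{1/p} = r\big(\sum_t E_P[|h_t|^p]\big)^{1/p}$, so we require $\sum_t E_P[|h_t|^p]\le 1$. Then a first-order Taylor expansion gives
\[
E_{Q_r}[f(X)] - E_P[f(X)] = r\sum_{t=1}^T E_P[\partial_{x_t}f(X)\, h_t] + o(r),
\]
where the growth hypothesis on $\nabla f$ together with $h\in L^p$ controls the remainder uniformly (dominated convergence after a mean-value-theorem bound of the form $|f(x+rh)-f(x)|\le r|h|\cdot c(1+\sum|x_s|^{p-1}+r^{p-1}|h|^{p-1})$). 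Now, using predictability of $h$ and the tower property, $E_P[\partial_{x_t}f(X)h_t] = E_P\big[E_P[\partial_{x_t}f(X)\mid\mathcal{F}^X_t]\,h_t\big]$. Optimizing each term over $h_t$ subject to the budget $\sum_t E_P[|h_t|^p]\le 1$ is a finite-dimensional $\ell^p$-maximization: by H\"older the optimal choice is $h_t \propto \operatorname{sgn}(g_t)|g_t|^{q-1}$ with $g_t:= E_P[\partial_{x_t}f(X)\mid\mathcal{F}^X_t]$, suitably normalized, which yields $\sum_t E_P[g_t h_t] = C$. (If some $g_t$ lies only in a worse space, truncate $h_t$ and pass to the limit.) This gives $\liminf_{r\downarrow0}\frac1r\big(\sup_{Q\in B_r(P)}E_Q[f(X)]-E_P[f(X)]\big)\ge C$.

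For the \emph{upper bound}, fix any $Q\in B_r(P)$ and any bicausal coupling $\pi$ with $\big(\sum_t E_\pi[|X_t-Y_t|^p]\big)^{1/p}\le r$ (plus $\varepsilon r$ to be safe, then let $\varepsilon\downarrow0$). Write, with $\Delta_t := Y_t - X_t$,
\[
E_Q[f(X)] - E_P[f(X)] = E_\pi[f(Y)-f(X)] = \int_0^1 E_\pi\Big[\sum_{t=1}^T \partial_{x_t}f(X+\theta\Delta)\,\Delta_t\Big]\,d\theta.
\]
Splitting $\partial_{x_t}f(X+\theta\Delta) = \partial_{x_t}f(X) + \big(\partial_{x_t}f(X+\theta\Delta)-\partial_{x_t}f(X)\big)$, the second piece is bounded in absolute value, using the growth assumption and $\|\Delta\|_{L^p(\pi)}\le r$, by $o(1)\cdot\|\Delta\|_{L^p(\pi)} = o(r)$ (continuity of $\nabla f$ gives the $o(1)$ after dominating; this is where a little care with uniform integrability of $c(1+\sum|X_s|^{p-1})$ under $P\in\mathcal{P}_p$ is needed). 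For the main piece, the \textbf{key step} is to invoke \emph{causality}: since $\pi$ is causal, $E_\pi[\Delta_t\mid \mathcal{F}^X_T] = E_\pi[Y_t\mid X_1,\dots,X_T] - X_t = E_\pi[Y_t\mid X_1,\dots,X_t] - X_t =: \delta_t(X)$, an $\mathcal{F}^X_t$-measurable function. Hence
\[
E_\pi\big[\partial_{x_t}f(X)\,\Delta_t\big] = E_\pi\big[\partial_{x_t}f(X)\,\delta_t(X)\big] = E_P\big[ E_P[\partial_{x_t}f(X)\mid\mathcal{F}^X_t]\,\delta_t(X)\big] = E_P[g_t\,\delta_t].
\]
By conditional Jensen, $\sum_t E_P[|\delta_t|^p] \le \sum_t E_\pi[|\Delta_t|^p]\le r^p$. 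Applying H\"older coordinatewise and then the finite-dimensional $\ell^p$–$\ell^q$ inequality gives $\sum_t E_P[g_t\delta_t] \le C\cdot\big(\sum_t E_P[|\delta_t|^p]\big)^{1/p}\le Cr$. Taking the supremum over $Q$ and $\pi$ yields $\limsup_{r\downarrow0}\frac1r(\cdots)\le C$, matching the lower bound.

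The step I expect to be the main obstacle is the uniform control of the remainder term in the upper bound: one needs $E_\pi\big[(1+\sum_s|X_s|^{p-1})|\Delta|\big]\to 0$ as $r\downarrow0$ uniformly over all admissible couplings after the continuity-of-$\nabla f$ factor is extracted, which requires arguing that the collection $\{X_s\}$ has $L^p(\pi)$-norms bounded in terms of $P$ only (true, since the $X$-marginal is $P$) and a uniform-integrability / truncation argument to convert pointwise convergence of $\partial_{x_t}f(X+\theta\Delta)-\partial_{x_t}f(X)\to0$ (along $\Delta\to0$ in probability, which follows from $\|\Delta\|_{L^p}\le r$) into the desired $o(r)$ bound; a clean way is to split on $\{|\Delta|\le R\}$ and its complement, use continuity on the first and the $L^p$-growth bound plus Chebyshev on the second, then send $r\downarrow0$ and $R\to\infty$. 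A secondary technical point is confirming bicausality (not merely causality) of the competitor coupling in the lower bound; this is immediate for the Monge coupling $\mathrm{id}+rh$ when $h$ is predictable and $r$ small, since the map is then invertible with predictable inverse, but it is worth stating explicitly.
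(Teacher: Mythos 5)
Your overall strategy coincides with the paper's: lower bound via an explicit adapted perturbation $X\mapsto X+rh(X)$ with $h$ chosen by $\ell^p$--$L^p$ duality, upper bound via the fundamental theorem of calculus, a conditioning step exploiting causality, and H\"older. Your upper bound is organized slightly differently (you condition the displacement $Y_t-X_t$ on $\sigma(X_1,\dots,X_T)$ and use causality to replace it by the $\mathcal{F}^X_t$-measurable $\delta_t$, together with conditional Jensen; the paper instead conditions the gradient on $\mathcal{F}^{X,Y}_t$ and passes to the limit in $L^q$), but both versions are correct and rest on the same identity for causal couplings. Your treatment of the remainder term by truncation and uniform integrability is also essentially what the paper does.

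There is, however, one concrete gap, and it is exactly the point the paper devotes a separate lemma to. The coupling $\pi_r=(X,X+rh(X))_\ast P$ is causal because each $h_t$ is $\mathcal{F}^X_t$-measurable, but it is \emph{not} bicausal in general, and your primary justification --- that for small $r$ the map $x\mapsto x+rh(x)$ is ``invertible with predictable inverse'' --- fails: the optimal $h_t\propto\operatorname{sgn}(g_t)|g_t|^{q-1}$ is merely a measurable $L^p$ function with no Lipschitz or monotonicity structure, so $x_t\mapsto x_t+rh_t(x_{1:t})$ need not be injective for \emph{any} $r>0$. When it is not injective, knowledge of $Y_{t+1},\dots,Y_T$ can reveal additional information about $X_1,\dots,X_t$ beyond what $Y_1,\dots,Y_t$ contains, violating the reversed condition \eqref{eq:causal}. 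Since the definition of $\mathcal{AW}_p$ only admits bicausal couplings, without repairing this you have not shown $Q_r\in B_{r}(P)$, and the lower bound does not go through. The correct fix is the one you mention only in passing (``a slightly perturbed map''): this is Lemma \ref{lem:bi-causal} of the paper, which replaces $Y_t$ by $Y^\delta_t=\phi_\delta(Y_t)+\psi_\delta(X_t)$ so that $X_t$ becomes $\sigma(Y^\delta_t)$-measurable, forcing bicausality at the cost of a perturbation of size $\delta$; one then obtains membership in $B_{r+T\delta}(P)$ and sends $\delta\downarrow0$ (equivalently, works in $B_{r+\varepsilon}(P)$ and lets $\varepsilon=o(r)$). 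With that lemma inserted, your argument is complete.
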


\subsection{Multiperiod stochastic optimization problems}\label{sec:con}
Fix a constant $L$ throughout this section, and denote by $\mathcal{A}$ the set of all  \emph{predictable controls} bounded by $L$, i.e., every $a=(a_t)_{t=1}^T\in\mathcal{A}$ is such that  $a_t\colon\mathbb{R}^T\to\mathbb{R}$ only depends on $x_1,\dots,x_{t-1}$ (with the convention that $a_1$ is deterministic) and that $|a_t|\leq L$ for a fixed constant $L$. 
Recall that
\[ v(Q)=\inf_{a\in\mathcal{A}} E_Q[ f(X, a(X))],\]
where $f\colon\mathbb{R}^T\times\mathbb{R}^T\to\mathbb{R}$ is  assumed to be convex in the control variable (i.e.,\ its  second argument).

\begin{assumption}
\label{ass:optim.f.strict.convex}
For every $x\in\mathbb{R}^T$,  $f(x,\cdot)$ is twice continuously differentiable and
strongly convex in the sense that $\nabla^2_a f(X,\cdot )\succ \varepsilon(X) I$ on $[- L,L]^T$  where $I$ is the identity matrix\footnote{For two $T\times T$-matrices $A$ and $B$, we write $A\succ B$ if $A-B$ is positive semidefinite, that is,  $\langle Az,z\rangle \geq \langle Bz,z\rangle$ for all $z\in\R^T$.} and $P(\varepsilon(X)>0)=1$.
Moreover, $f(\cdot, a)$ is differentiable for every $a\in \mathbb{R}^T$, its partial derivatives $\partial_{x_t} f$ are jointly continuous, and there is a constant $c>0$ such that
\begin{align*}
\sum_{s=1}^T |\partial_{x_s} f(x,a)|\le c\cdot \Big( 1+\sum_{s=1}^T |x_s|^{p-1} \Big)
\end{align*}
for every $x\in\mathbb{R}^T$  and $a\in[-L,L]^T$.
\end{assumption}

\begin{theorem}
\label{thm:sensitivity_opt}
	If Assumption \ref{ass:optim.f.strict.convex} holds true, then there exists exactly one $a^\ast\in\mathcal{A}$ such that $v(P)= E_P[f(X,a^\ast(X))]$. Furthermore, as $r\downarrow0$,
	\[\sup_{Q\in B_r(P)} v (Q) = v(P)  + r\cdot  \Big( \sum_{t=1}^T E_P\big[ |E_P[ \partial_{x_t} f(X,a^\ast(X)) | \mathcal{F}_t^X]|^q \big] \Big)^{1/q} + o(r).
	\]
\end{theorem}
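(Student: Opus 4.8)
The plan is to reduce the controlled problem to the uncontrolled Theorem \ref{thm:main} via an envelope argument, exploiting strong convexity to control how the optimal control moves under a perturbation of the model. First I would establish existence and uniqueness of $a^\ast$: since $|a_t|\le L$, the set $\mathcal{A}$ is (after identifying controls that agree $P$-a.s.) a bounded convex subset of a suitable $L^2(P)$-type space; $a\mapsto E_P[f(X,a(X))]$ is convex and lower semicontinuous, hence attains its infimum, and the strong convexity $\nabla_a^2 f(X,\cdot)\succ\varepsilon(X)I$ with $P(\varepsilon(X)>0)=1$ forces uniqueness (any two minimizers would give a strictly smaller value at their midpoint on the event where they differ). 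This strong convexity will also be the workhorse for the stability estimate below.

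The core of the argument is a two-sided bound on $\sup_{Q\in B_r(P)}v(Q)-v(P)$. For the \emph{lower bound}, I would fix the suboptimal-for-$Q$ control $a^\ast$: then $v(Q)\ge$ wait, no --- $v(Q)=\inf_a E_Q[f(X,a(X))]\le E_Q[f(X,a^\ast(X))]$, so this gives an upper bound, not a lower one; instead, for the lower bound on $\sup_Q v(Q)$ I would pick, for each small $r$, a near-optimal competitor $Q_r\in B_r(P)$ coming from the uncontrolled problem applied to $x\mapsto f(x,a^\ast(x))$ (note $a^\ast$ is a fixed predictable function of $x$, so $\tilde f(x):=f(x,a^\ast(x))$ is a legitimate function of $x$ alone, and since $a^\ast$ is bounded by $L$ the growth hypothesis in Assumption \ref{ass:optim.f.strict.convex} transfers to a bound on $\sum_s|\partial_{x_s}\tilde f|$, modulo handling the chain-rule terms $\nabla_a f\cdot \partial_{x_s}a^\ast$ --- but $a^\ast_t$ depends only on $x_1,\dots,x_{t-1}$, and in fact the cleanest route is \emph{not} to differentiate $\tilde f$ but to use the optimality of $a^\ast$ so that the first-order effect of moving $a$ vanishes). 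Concretely: $v(Q)\le E_Q[\tilde f(X)]$, and by Theorem \ref{thm:main} applied to $\tilde f$ (after checking its hypotheses) there is $Q_r\in B_r(P)$ with $E_{Q_r}[\tilde f(X)]= E_P[\tilde f(X)]+r\,(\sum_t E_P[|E_P[\partial_{x_t}\tilde f(X)|\mathcal F_t^X]|^q])^{1/q}+o(r)$; and one checks $\partial_{x_t}\tilde f(X)=\partial_{x_t}f(X,a^\ast(X))$ $P$-a.s.\ because the extra terms $\sum_{s>t}\partial_{a_s}f(X,a^\ast(X))\,\partial_{x_t}a^\ast_s(X)$ integrate to zero against anything $\mathcal F_t^X$-measurable after conditioning --- here one uses that $a^\ast_s$ with $s>t$ is a free (only $\mathcal F^X_{s-1}$-measurable, hence in particular perturbable while keeping $\mathcal F^X_t$) coordinate of the optimizer, so the first-order optimality condition for $v(P)$ gives $E_P[\partial_{a_s}f(X,a^\ast(X))\mid\mathcal F^X_{s-1}]=0$. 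Hence the lower bound $\sup_{Q\in B_r(P)}v(Q)\ge v(P)+r\,(\dots)^{1/q}+o(r)$ follows with exactly the asserted constant.

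For the matching \emph{upper bound}, fix $Q\in B_r(P)$ and a bicausal coupling $\pi$ nearly attaining $\mathcal{AW}_p(P,Q)$. Let $a^Q$ be the optimizer for $v(Q)$ (existence/uniqueness as above). Then $v(Q)-v(P)\le E_Q[f(X,a^Q(X))]-E_P[f(X,a^P(X))]$ where I write $a^P=a^\ast$; using the coupling and bicausality, transport $a^Q$ back to a predictable (in $X$) control --- because $\pi$ is causal, $E_\pi[a^Q_t(Y)\mid\mathcal F^{X}_{t-1}]$ is a legitimate element of $\mathcal{A}$ --- and write the difference as $E_\pi[f(Y,a^Q(Y))-f(X,\hat a(X))]$ for an appropriate $\hat a$; expand $f$ to first order in $x$ along the coupling (the $y\mapsto x$ increments are the $X_t-Y_t$, whose $p$-norms sum to $\le (r+o(r))$), and to second order in $a$ around $a^\ast$, where strong convexity gives $f(X,\hat a(X))\ge f(X,a^\ast(X))+\langle\nabla_a f(X,a^\ast(X)),\hat a-a^\ast\rangle+\tfrac12\varepsilon(X)|\hat a-a^\ast|^2$. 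The linear term in $a$ is killed again by first-order optimality of $a^\ast$, leaving $-\tfrac12\varepsilon(X)|\hat a-a^\ast|^2$ plus the $x$-gradient term $\sum_t\partial_{x_t}f(X,a^\ast(X))(Y_t-X_t)$ plus a remainder. Taking conditional expectations, using causality to replace $\partial_{x_t}f(X,a^\ast(X))$ by its $\mathcal F^X_t$-conditional expectation when paired with $Y_t-X_t$ (the key causal cancellation, exactly as in the proof of Theorem \ref{thm:main}), and bounding via Hölder, yields $v(Q)-v(P)\le r\,(\sum_t E_P[|E_P[\partial_{x_t}f(X,a^\ast(X))|\mathcal F_t^X]|^q])^{1/q}+o(r)$ uniformly over $Q\in B_r(P)$; the negative quadratic term is simply discarded for the upper bound (it only helps), and the remainder terms are $o(r)$ by the growth assumption and uniform integrability coming from the moment bound and boundedness of the controls.

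\textbf{Main obstacle.} The delicate point is the uniformity in $Q$ of the $o(r)$ terms and the rigorous justification of the first-order expansion of $f$ in $x$ along an \emph{arbitrary} bicausal coupling: one must argue that the optimal controls $a^Q$ converge to $a^\ast$ (in $L^2(P)$, say) as $r\downarrow0$ --- this is where strong convexity with the possibly-random modulus $\varepsilon(X)$ is essential and somewhat technical, since $\varepsilon$ need not be bounded below --- and then that the gradient terms $\partial_{x_t}f(X,a^Q(X))$ can be replaced by $\partial_{x_t}f(X,a^\ast(X))$ up to $o(1)$ in the relevant norm, using joint continuity of $\partial_{x_t}f$ together with the moment/growth bound to get the needed uniform integrability. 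I expect this stability-of-the-optimizer step, and the careful bookkeeping that the resulting errors are $o(r)$ \emph{uniformly} over the $\mathcal{AW}_p$-ball, to be the heart of the proof; everything else is a relatively mechanical adaptation of the uncontrolled argument behind Theorem \ref{thm:main} together with the envelope (first-order optimality) identity for $a^\ast$.
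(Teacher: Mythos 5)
Your high-level toolkit is the right one (Hölder duality for the constant, the causal conditioning trick from Theorem \ref{thm:main}, strong convexity for uniqueness and for stability of the optimizer), and the uniqueness argument is fine. But in both directions of the main estimate your chain of inequalities points the wrong way, and as written neither direction closes. For the \emph{upper bound}: you pull the $Q$-optimizer $a^Q$ back to $\hat a$ and want to bound $E_\pi[f(Y,a^Q(Y))-f(X,\hat a(X))]$. Since $E_P[f(X,\hat a(X))]\ge v(P)$, this quantity is a \emph{lower} bound for $v(Q)-v(P)$, so bounding it from above proves nothing about $\sup_Q v(Q)$. The correct (and much simpler) move is the reverse transport: set $b_t:=E_\pi[a^\ast_t(X)\mid\mathcal F^Y_T]$, which is $\mathcal F^Y_{t-1}$-measurable by bicausality, so that Jensen gives $v(Q)\le E_\pi[f(Y,b(Y))]\le E_\pi[f(Y,a^\ast(X))]$; the remaining difference $E_\pi[f(Y,a^\ast(X))-f(X,a^\ast(X))]$ is a pure $x$-perturbation with the control frozen, and the Theorem \ref{thm:main} computation applies verbatim. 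No envelope identity, no strong convexity and no stability of $a^Q$ are needed for this direction.

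For the \emph{lower bound} the gap is more serious. Producing $Q_r$ with $E_{Q_r}[\tilde f(X)]=v(P)+rC+o(r)$, where $\tilde f(x)=f(x,a^\ast(x))$, only yields $v(Q_r)\le E_{Q_r}[\tilde f(X)]$ — the wrong inequality. To conclude you would need $v(Q_r)\ge E_{Q_r}[\tilde f(X)]-o(r)$, and the naive route via strong convexity only gives $\|a^{Q_r}-a^\ast\|_{L^2(P)}^2=O(r)$, i.e., an error of the \emph{same} order $O(r)$ as the term you are trying to identify; your envelope heuristic does not by itself upgrade this to $o(r)$ (and the first-order condition is only a variational inequality at the boundary of $[-L,L]$, while $\tilde f$ need not even be differentiable since $a^\ast$ is merely measurable, so Theorem \ref{thm:main} does not literally apply to $\tilde f$). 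The way to avoid ever comparing the two controls at rate $o(r)$ is: take $Q_r$ the law of $X+rZ$ with $Z$ adapted and chosen by Hölder duality, let $b^r$ be the pullback of a near-optimizer of $v(Q_r)$, and use $v(P)\le E_P[f(X,b^r(X))]$ to get $v(Q_r)-v(P)\ge E_P[f(X+rZ,b^r(X))-f(X,b^r(X))]-o(r)$ with the \emph{same} control in both terms; strong convexity then enters only to show $b^r(X)\to a^\ast(X)$ $P$-a.s.\ along a subsequence, which identifies the limit of the gradient term by continuity of $\partial_{x_t}f$. Your ``main obstacle'' paragraph correctly flags the stability of optimizers, but attaches it to the upper bound, where it is not needed, rather than to the lower bound, where it is the crux.
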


\begin{remark}
	The restriction to controls that are uniformly bounded (i.e.,\ satisfy $|a_t(x)|\leq L$) is necessary to guarantee continuity of $Q\mapsto v(Q)$ in general.
	This can be seen easily in the utility maximization example below---even when restricting to models that satisfy a no-arbitrage condition, see, e.g., \cite[Remark 5.3]{backhoff2020adapted}.
\end{remark}

\begin{example}%
\label{ex:1}
	Let $\ell\colon\mathbb{R}\to\mathbb{R}$ be a convex loss function, i.e., $\ell$ is bounded from below and  convex.
	Moreover let $g\colon\mathbb{R}^T\to\mathbb{R}$ be (the negative of) a payoff function and consider the problem
	\[ u(P):=\inf_{a\in\mathcal{A}} E_P\Big[ \ell\Big( g(X) + \sum_{t=1}^T a_t(X)(X_t-X_{t-1}) \Big) \Big],\]
	where $X_0\in\mathbb{R}$ is a fixed value.
As discussed in the introduction, $u(P)$ corresponds to the utility maximisation problem with payoff $g$.
	
	Suppose that  $\ell$ is twice continuously differentiable with $|\ell'(u)|\leq c(1+|u|^{p-1})$ and $\ell''>0$, that $g$ is continuously differentiable with bounded derivative, and  that $P(X_{t-1}=X_t)=0$ for all $t$. 
	Then Assumption \ref{ass:optim.f.strict.convex} is satisfied.
\end{example}

The assumption that $P(X_{t+1}= X_t)=0$ is used to prove strong convexity in the sense of Assumption \ref{ass:optim.f.strict.convex}.
	In the present one-dimensional setting, it simply
	means that the stock price does not stay constant from time $t$ to $t+1$ with positive probability.
	It is satisfied, for instance, if $X$ is a Binomial tree under $P$, or if $P$ has a density with respect to the Lebesgue measure---in particular, if $X$ is a discretized SDE with non-zero volatility.
	Moreover the assumption that the derivative of $g$ is bounded can be relaxed at the price of restricting to $\ell$ with a slower growth.

\begin{corollary}
\label{cor:ut.max}
	In the setting of Example \ref{ex:1}:
	Let $a^\ast$ be the unique optimizer for $u(P)$, set $a^\ast_{T+1}:=0,$ $$Z:=g(X)+ \sum_{t=1}^T a^\ast_t(X_t-X_{t-1}),$$ and 	
	\begin{align*} 
	V:=\Big(\sum_{t=1}^T  E_P\Big[ \Big| (a^\ast_{t+1} - a^\ast_t) \cdot E_P\big[ \ell'( Z )|\mathcal{F}_t^X \big]  - E_P\big[ \ell'( Z ) \partial_{x_t} g(X) |\mathcal{F}_t^X \big] \Big|^q \Big]\Big)^{1/q}.
\end{align*}	
	Then 
	\begin{align*}
	\sup_{Q\in B_r(P)} u(Q)  =u(P) + r\cdot V + o(r)\qquad \text{as }r\downarrow 0.
	\end{align*}
\end{corollary}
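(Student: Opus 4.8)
The plan is to deduce Corollary \ref{cor:ut.max} from Theorem \ref{thm:sensitivity_opt} applied to the specific cost function
\[
f(x,a) := \ell\Big( g(x) + \sum_{t=1}^T a_t (x_t - x_{t-1}) \Big),
\]
for which Example \ref{ex:1} already asserts that Assumption \ref{ass:optim.f.strict.convex} is satisfied; hence the existence and uniqueness of the optimizer $a^\ast$ is immediate and the sensitivity formula holds with the abstract correction term $\big( \sum_{t=1}^T E_P[ |E_P[ \partial_{x_t} f(X,a^\ast(X)) | \mathcal{F}_t^X]|^q ] \big)^{1/q}$. The entire content of the corollary is therefore the \emph{identification} of this abstract term with the explicit quantity $V$. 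First I would verify the claim in Example \ref{ex:1} that the hypotheses ($\ell$ twice continuously differentiable with the stated growth bound, $\ell''>0$, $g$ continuously differentiable with bounded derivative, $P(X_{t-1}=X_t)=0$) imply Assumption \ref{ass:optim.f.strict.convex}: joint continuity and the growth bound on $\partial_{x_t} f$ follow from the chain rule and the boundedness of $\nabla g$ together with $|\ell'(u)|\le c(1+|u|^{p-1})$ and $|a_t|\le L$; strong convexity in $a$ on $[-L,L]^T$ follows because $\nabla_a^2 f(x,a) = \ell''\big(g(x)+\sum_t a_t(x_t-x_{t-1})\big)\, \xi \xi^\top$ where $\xi = (x_t - x_{t-1})_{t=1}^T$, and $P$-a.s.\ this rank-one matrix is strictly positive in every direction that is not orthogonal to $\xi$ — here one needs to be a little careful since $\xi\xi^\top$ is only rank one, so strong convexity as literally stated ($\nabla_a^2 f \succ \varepsilon(X) I$) requires an argument; I would expect the resolution to be that $f$ actually only depends on $a$ through the scalar $\sum_t a_t(x_t - x_{t-1})$, so one should either read Assumption \ref{ass:optim.f.strict.convex} in the appropriate reduced sense or reparametrize, and this is the subtle point of the reduction.

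Next I would compute $\partial_{x_t} f(x, a^\ast(x))$ by the chain rule. Writing $h(x) := g(x) + \sum_{s=1}^T a^\ast_s(x)(x_s - x_{s-1})$ and $Z = h(X)$, the $t$-th partial derivative of $f(\cdot, a^\ast(\cdot))$ must be handled with care because $a^\ast$ is itself a function of $x$; however — and this is the key structural observation — in the abstract Theorem \ref{thm:sensitivity_opt} the term $\partial_{x_t} f(X, a^\ast(X))$ means the partial derivative of the map $x\mapsto f(x,a)$ evaluated at $a=a^\ast(X)$, i.e.\ $a$ is held \emph{frozen}, not differentiated. With $a$ frozen at the value $a^\ast(x)$, the $x_t$-dependence of $f(x,a) = \ell(g(x) + \sum_s a_s(x_s-x_{s-1}))$ enters through $\partial_{x_t} g(x)$ and through the two terms $a_t(x_t - x_{t-1})$ and $a_{t+1}(x_{t+1}-x_t)$, giving
\[
\partial_{x_t} f(x, a^\ast(x)) = \ell'\big(h(x)\big)\Big( \partial_{x_t} g(x) + a^\ast_t(x) - a^\ast_{t+1}(x) \Big),
\]
with the convention $a^\ast_{T+1} = 0$ taking care of $t = T$. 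Substituting $Z = h(X)$ and taking $\mathcal{F}^X_t$-conditional expectations, and using that $a^\ast_t$ and $a^\ast_{t+1}$ are $\mathcal{F}^X_{t-1}$- and $\mathcal{F}^X_t$-measurable respectively (predictability of the control), one pulls $a^\ast_{t+1} - a^\ast_t$ out of the conditional expectation to obtain
\[
E_P\big[ \partial_{x_t} f(X, a^\ast(X)) \,\big|\, \mathcal{F}^X_t \big] = (a^\ast_{t+1} - a^\ast_t)\, E_P\big[ \ell'(Z) \,\big|\, \mathcal{F}^X_t\big] - E_P\big[ \ell'(Z)\, \partial_{x_t} g(X) \,\big|\, \mathcal{F}^X_t\big],
\]
up to the sign convention on $(a^\ast_{t+1}-a^\ast_t)$ versus $(a^\ast_t - a^\ast_{t+1})$, which matches the definition of $V$ (note the factor $(a^\ast_{t+1}-a^\ast_t)$ appears there). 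Raising to the power $q$, taking expectations, summing over $t$ and the $q$-th root then yields exactly $V$, and plugging into Theorem \ref{thm:sensitivity_opt} gives the claimed expansion $\sup_{Q\in B_r(P)} u(Q) = u(P) + rV + o(r)$.

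The main obstacle I anticipate is not any hard estimate but rather the bookkeeping around \emph{what exactly $\partial_{x_t} f(X,a^\ast(X))$ denotes} and the fact that $a^\ast$ is $x$-dependent: one has to be completely clear that the theorem's correction term involves the partial derivative of $f$ in its first argument with the second argument frozen, so that no $\nabla_a f$ terms appear — and this is consistent with the envelope-theorem intuition, since at the optimum the first-order effect of perturbing $X$ through the control vanishes. A secondary, more technical point is the verification that Assumption \ref{ass:optim.f.strict.convex}'s strong-convexity condition is genuinely met given that $f$ depends on $a$ only through a single linear combination $\sum_t a_t(X_t - X_{t-1})$ (so the Hessian in $a$ is rank one, not full rank); I expect this to be resolved by the observation that for the \emph{existence/uniqueness} part only strict convexity along the relevant direction is needed, or by working on the quotient, and I would either cite the discussion following Example \ref{ex:1} or spell out that the condition $P(X_{t-1}=X_t)=0$ ensures $\ell''(Z)(X_t-X_{t-1})^2 > 0$ $P$-a.s.\ for each $t$ so that the optimizer in each coordinate is pinned down. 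Once these conceptual points are settled, the remainder is the routine chain-rule computation and conditioning argument sketched above.
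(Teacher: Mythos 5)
Your proposal is correct and follows essentially the same route as the paper: verify Assumption~\ref{ass:optim.f.strict.convex} for $f(x,a)=\ell\big(g(x)+\sum_t a_t(x_t-x_{t-1})\big)$, compute $\partial_{x_t}f(x,a)=\ell'(\cdot)\big(\partial_{x_t}g(x)+a_t-a_{t+1}\big)$ with the second argument frozen, condition on $\mathcal{F}_t^X$ and pull out $a^\ast_{t+1}-a^\ast_t$ by predictability, and invoke Theorem~\ref{thm:sensitivity_opt} (the sign difference is immaterial under the absolute value). Your hesitation about the rank-one Hessian is well founded and is in fact a gap in the paper's own verification: the quadratic form displayed there, $\ell''(\cdot)\sum_t u_t^2(x_t-x_{t-1})^2$, should be $\ell''(\cdot)\big(\sum_t u_t(x_t-x_{t-1})\big)^2$, which vanishes for $u$ orthogonal to $(x_t-x_{t-1})_t$, so $\nabla^2_a f\succ\varepsilon(X)I$ cannot hold literally when $T\ge 2$; as you indicate, uniqueness of $a^\ast$ and the convergence in Lemma~\ref{lem:optim.stategies.converge} must instead be recovered from the predictability of the controls combined with $P(X_t=X_{t-1})=0$ (a backward-induction/conditional-variance argument), or by reading the strong-convexity hypothesis in the appropriately reduced sense.
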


Note that for $p=q=2$ and $g=0$, $F$ is essentially the expected quadratic variation of $a^\ast$, but not under $P$, but distorted by the $l^2$-distance of the conditioned Radon-Nykodym density of an equivalent martingale measure w.r.t.\ $P$.

\subsection{Optimal stopping problems}\label{sec:sto}
Let $f\colon\R^T\times\{1,\dots,T\} \to\mathbb{R}$ be such that $f(X,t)$ is  $\F_t^X$-measurable for $t=1, \dots, T$ and consider
\begin{align*}
 s(Q) := \inf_{\tau \in \mathrm{ST}}  \mathbb{E}_{Q}[ f(X,\tau)],
 \end{align*}
where $\mathrm{ST}$ refers to the set of all bounded stopping times with respect to the canonical filtration, i.e., $\tau\in \mathrm{ST}$ if $\tau\colon\R^T\to\{1,\dots,T\}$ is such that $\{\tau\leq t\}\in \mathcal{F}_t^X$ for every $t$.

\begin{theorem}\label{thm:opt_stopping}
	Assume that $f(\cdot,t)$ is continuously differentiable for every $t=1,\dots,T$ and that there is a constant $c>0$ such that 
	\[\sum_{s=1}^T | \partial_{x_s} f(x,t)|
	\le c \Big(1+\sum_{s=1}^T |x_s|^{p-1} \Big)
	\]
	for every $x\in\R^T$ and $t=1, \dots,T$.
	 Furthermore assume that there exists exactly one optimal stopping time $\tau^\ast$ for $s(P)$.
		Then, as $r\downarrow0$,	
	\[\sup_{Q\in B_r(P)} s(Q) = s(P) + r\cdot \left(\sum_{t=1}^T E_P\left[  \left|E_{P}\left[ \partial_{x_t} f(X,\tau^\ast) |\F_t^X\right]\right|^q\right]\right)^{1/q} +o(r). \]
\end{theorem}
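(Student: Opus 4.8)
The plan is to reduce the optimal stopping problem to an instance of the uncontrolled sensitivity result, Theorem \ref{thm:main}, by exploiting the fact that the optimizer $\tau^\ast$ for $s(P)$ is a fixed stopping time. First I would establish the upper bound. For any competitor $Q\in B_r(P)$ and any bicausal coupling $\pi$ between $P$ and $Q$, one has $s(Q)\le E_Q[f(X,\tau)]$ for \emph{every} $\tau\in\mathrm{ST}$; the key point is that, because $\tau^\ast$ is a stopping time for the canonical filtration and $\pi$ is causal, the random time $\sigma:=\tau^\ast(X)$ (with $X$ the first coordinate) can be transported to a stopping time on the $Y$-side — more carefully, one uses that causality of $\pi$ lets one choose, for the $Q$-marginal, a stopping time built from $\tau^\ast$ composed with the transport, so that $s(Q)-s(P)\le E_\pi[f(Y,\tau^\ast(Y))]-E_P[f(X,\tau^\ast(X))]$ up to controlled error. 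This turns the left-hand side into the sensitivity of the \emph{deterministic} functional $Q\mapsto E_Q[\tilde f(X)]$ where $\tilde f(x):=f(x,\tau^\ast(x))$, except that $\tilde f$ is not smooth because $\tau^\ast$ jumps. I would circumvent this by writing $\tilde f(x)=\sum_{t=1}^T f(x,t)\mathbf{1}_{\{\tau^\ast(x)=t\}}$ and noting that on each event $\{\tau^\ast=t\}$ the relevant gradient is $\partial_{x_s}f(x,t)$; the growth bound on $\partial_{x_s}f(\cdot,t)$ is uniform in $t$, so the argument of Theorem \ref{thm:main} goes through verbatim with $\partial_{x_t}\tilde f$ replaced by $\partial_{x_t}f(X,\tau^\ast)$, yielding the upper bound
\[
\limsup_{r\downarrow 0}\frac{s(Q_r)-s(P)}{r}\le \Big(\sum_{t=1}^T E_P\big[|E_P[\partial_{x_t}f(X,\tau^\ast)\,|\,\mathcal F_t^X]|^q\big]\Big)^{1/q}
\]
for any family $Q_r\in B_r(P)$.

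For the matching lower bound I would construct an explicit near-optimal perturbation. Following the one-period recipe of \cite{bartl2021sensitivity}, I would take a bicausal (in fact causal Monge-type) perturbation of the form $Y_t=X_t+r\,h_t(X_1,\dots,X_t)$ where the direction $h=(h_t)$ is chosen, componentwise, to align with the sign of the conditional gradient: roughly $h_t\propto \mathrm{sgn}\big(E_P[\partial_{x_t}f(X,\tau^\ast)\,|\,\mathcal F_t^X]\big)\cdot |E_P[\partial_{x_t}f(X,\tau^\ast)\,|\,\mathcal F_t^X]|^{q-1}$, normalized so that $\sum_t E_P[|h_t|^p]=1$; predictability of $h_t$ on $\mathcal F_t^X$ ensures the induced coupling is bicausal, hence $\mathcal{AW}_p(P,Q_r)\le r+o(r)$. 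Then $s(Q_r)\ge s(P)+$ (value of a stopping problem against the perturbed law); choosing to stop at $\tau^\ast$ is \emph{not} optimal for $Q_r$ in general, but since $\tau^\ast$ is strictly optimal for $s(P)$, a first-order expansion shows the loss from using the "wrong" stopping rule on the perturbed model is $o(r)$ — this is the envelope-theorem phenomenon, and it is exactly here that the hypothesis of a \emph{unique} optimal $\tau^\ast$ is used. A clean way to make this rigorous is to note that $\mathrm{ST}$ is finite-dimensional in the sense that there are only finitely many adapted partitions in play once one discretizes, or more robustly, to use that for $Q$ close to $P$ the optimal stopping region is stable; I would argue $s(Q_r)=E_{Q_r}[f(X,\tau^\ast)]+o(r)$ directly from $|s(Q_r)-E_{Q_r}[f(X,\tau^\ast)]|\le |s(Q_r)-s(P)|\cdot(\text{something small})$ together with Lipschitz continuity of $Q\mapsto s(Q)$ from \cite{backhoff2020adapted}.

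The main obstacle I anticipate is precisely this lower-bound step: transferring the "the optimizer doesn't change to first order" argument from the smooth convex control setting (where strong convexity gives quantitative stability of $a^\ast$) to the combinatorial setting of stopping times, where $\tau^\ast$ lives in a discrete set and there is no strong-convexity modulus to lean on. The resolution should be that uniqueness of $\tau^\ast$ plus continuity of the value gives a \emph{gap}: there is $\delta>0$ with $E_P[f(X,\tau)]\ge s(P)+\delta$ for every $\tau\ne\tau^\ast$ among the finitely many "essentially distinct" stopping times, so for $r$ small enough the optimizer for $s(Q_r)$ must coincide with $\tau^\ast$ outside a set of $Q_r$-measure $o(1)$, and the contribution of that set to the value is $o(r)$ by the growth/integrability bound. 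Once both bounds are in place they match, and the proof concludes. The remaining bookkeeping — verifying the growth condition makes the $o(r)$ errors uniform, and that the upper-bound coupling argument is valid — is routine and parallels the proof of Theorem \ref{thm:main}.
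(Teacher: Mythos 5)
Your upper bound follows the paper's route: transport $\tau^\ast$ through the (bi)causal coupling to obtain a competitor stopping time on the $Q$-side, then run the fundamental-theorem-of-calculus/H\"older argument of Theorem \ref{thm:main}. One precision matters there: the correct comparison is $s(Q)\le E_\pi[f(Y,\tau^\ast(X))]$ with the stopping index \emph{frozen} at $\tau^\ast(X)$, not $E_\pi[f(Y,\tau^\ast(Y))]$. With the index frozen, $\lambda\mapsto f(X+\lambda(Y-X),\tau^\ast(X))$ is $C^1$ and no issue with the non-smoothness of $x\mapsto f(x,\tau^\ast(x))$ arises; your proposed decomposition over $\{\tau^\ast=t\}$ does not by itself repair the FTC step, because the segment from $X$ to $Y$ can cross the boundary of the stopping regions. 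This is a fixable imprecision, not a structural flaw.

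The lower bound, however, has two genuine gaps. First, adaptedness of the perturbation $h_t$ (i.e.\ $\mathcal F^X_t$-measurability) makes the Monge coupling $(X,X+rh)_\ast P$ \emph{causal}, not bicausal: bicausality requires the reversed conditional independence as well, which fails in general for such couplings. The paper needs Lemma \ref{lem:bi-causal} precisely to upgrade this coupling to a bicausal one with nearly the same marginal; without that step your candidate $Q_r$ is not known to lie in $B_{r+o(r)}(P)$. Second, and more seriously, your ``gap'' argument is false as stated: the set of stopping times is uncountable, and modifying $\tau^\ast$ on an event of small probability produces $\tau\ne\tau^\ast$ with $E_P[f(X,\tau)]$ arbitrarily close to $s(P)$, so no uniform $\delta>0$ exists. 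The genuine difficulty is that under $Q_r$ the stopping decision flips on the set where $|f(X,1)-E_P[f(X,2)\,|\,\F_1^X]|$ is of order $r$; uniqueness of $\tau^\ast$ only gives that this set has vanishing probability, and one must still control its $O(r)$-per-path contribution. The paper's device is to \emph{not perturb at all} on the unstable set: with $A^r,B^r$ the events where the perturbed comparison agrees with $\tau^\ast$'s decision, it sets $X^r=X+rZ\mathbf 1_{A^r\cup B^r}$, so that the Snell envelope of the perturbed model can be compared term by term with that of $P$, and then lets $\mathbf 1_{A^r\cup B^r}\to 1$. Your envelope-theorem heuristic points in the right direction, but neither the ``finitely many essentially distinct stopping times'' reduction nor the bound $|s(Q_r)-E_{Q_r}[f(X,\tau^\ast)]|\le|s(Q_r)-s(P)|\cdot(\text{small})$ is a valid argument, so the lower bound as proposed does not close.
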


\begin{example}
	It is instructive to consider Theorem \ref{thm:opt_stopping} in the special case where $f$ is Markovian, i.e., there is function $g\colon\R\to\R$ such that $f(x,t)=g(x_t)$ for all $t$ and $x$.
	Indeed, in this case, the first-order correction term  simplifies to $E_P[|g'(X_{\tau^\ast})|^q]^{1/q}$.
\end{example}

\subsection{Extensions and open questions}

To the best of our knowledge, this is the first work addressing the nonparametric sensitivity of multiperiod optimization problems (w.r.t.\ the adapted Wasserstein distance). 
Below we identify possible extensions, which are outside of the current scope of the current article. We plan to address these in future work.
\begin{enumerate}[wide, itemsep=0.2em,labelwidth=!, labelindent=0em]
\item
Our results extend to $\R^d$-valued stochastic processes and $\mathcal{AW}_p$ defined with an  arbitrary norm on $\R^d$. Similarly, the set of predictable controls in Theorem \ref{thm:sensitivity_opt} can be chosen to be any compact convex subset of $\R^d.$
The modifications needed are in line with \cite{bartl2021sensitivity}.
\item
A natural extension of our results from a financial perspective would be the analysis of sensitivities for robust option pricing:
let $P$ be a martingale law (i.e.\ $X$ is a $(\mathcal{F}_t^X)_{t=1}^T$-martingale under $P$) and  consider
\[ \sup_{Q\in B_r(P): \, Q \text{ is a martingale law} } E[f(X)] .\]
In one period models ($T=1$) this was carried out in \cite{bartl2021sensitivity,nendel2022}.
In a similar manner, it is natural to analyze sensitivity of robust American option pricing by considering only martingales in Theorem \ref{thm:opt_stopping}.

\item 
There are certain natural examples for $f$ that do not satisfy our regularity assumptions, e.g., %
in mathematical finance.
In a one-period framework, regularity of $f$ can be relaxed systematically, see  \cite{bartl2021sensitivity,  nendel2022}, and it is interesting to investigate if this is the case here as well.
\item
In some examples, the restriction to bounded controls is automatic, see, e.g., \cite{rasonyi2005utility}.
For instance, in the setting of Example \ref{ex:1} with $g=0$, we suspect that  similar arguments as used in \cite{rasonyi2005utility} might show that  a  ``conditional full support condition" of $P$ is sufficient to obtain  first-order approximation with unbounded strategies.
\item
We suspect that the assumption on the uniqueness of the optimizer in Theorems \ref{thm:sensitivity_opt} and \ref{thm:opt_stopping} can be relaxed.
Indeed, at least in a two-period setting, modifications of the arguments presented here can cover the general case in Theorem \ref{thm:opt_stopping}.

\item
Motivated from the literature on distributionally robust optimization problems cited in the introduction, one could also consider min-max problems of the form
\[
\inf_{a\in \mathcal{A}} \sup_{Q\in B_r(P)}  E_P[f(X,a(X))] .
\]
An important observation is that most arguments in the analysis of such problems (in the one-period setting) heavily rely on (convexity and) compactness of $B_r(P)$; both properties fail to hold true in multiple periods.
It was recently shown in \cite{bartl2021wasserstein} that these can be recovered by passing to an appropriate factor space of processes together with general filtrations.

\item
The present methods can be extended to cover functionals that depend not only on $P$ but also on its disintegrations---as is common in weak optimal transport (see, e.g., \cite{gozlan2017kantorovich}).
As an example, consider $T=2$ and $J(Q):= E_Q[ f(X_1)+g(Q_{X_1})]$, where the functions $f$ and $g$ are suitably (Fr\'echet) differentiable. 
Using the same arguments as in the proof of Theorem \ref{thm:main}, one can show that the first-order correction term equals $(E_P[ |f'(X_1)|^q+|E_P[g'(P_{X_1})]|^q])^{1/q}$.\footnote{When completing a first draft of this paper, we learned that similar results have been established by \cite{yifan} in independent research.}
\end{enumerate}

\section{Proofs}\label{sec:proofs}

\subsection{Proof of Theorem \ref{thm:main}}

We need the following technical lemma, which essentially states that causal couplings can be approximated by bicausal ones with similar marginals.
For a Borel probability measure $\pi$ on $\R^T\times\R^T$ and a Borel mapping $\phi$ from $\R^T\times\R^T$ to another Polish space, $\phi_\ast \pi$ denotes the push-forward of  measure $\pi$ under $\phi$.

\begin{lemma}
\label{lem:bi-causal}
Let $P,Q\in\mathcal{P}_p(\R^T)$ and let $\pi$ be a causal coupling between $P$ and $Q$. 
For each $\delta>0$ there exists $Y^\delta\colon \R^T\times\R^T\to\R^T$ such that $Y^\delta_t$ is $\F^{X,Y}_{t}$-measurable,  $X_t$ is $\sigma(Y^\delta_t)$-measurable, and $|Y_t^\delta-Y_t|\leq \delta$ for every $1\leq t\leq T$.

In particular, $\pi^\delta:=(X,Y^\delta)_\ast \pi$ is a bicausal coupling between $P$ and $Q^\delta:=Y^\delta_\ast\pi$.
\end{lemma}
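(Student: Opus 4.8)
The plan is to build $Y^\delta$ period by period, at each step perturbing $Y_t$ by a small amount so that it encodes enough information about $X_t$ to make $X_t$ measurable with respect to $\sigma(Y_t^\delta)$, while not destroying the $\F^{X,Y}_t$-measurability needed for causality in the other direction. The elementary building block is the following: given two separable metric spaces and a Borel map, one can find, for any $\eta>0$, a Borel injection $\iota\colon\R\to\R$ with $\|\iota - \mathrm{id}\|_\infty\le\eta$ together with a Borel left-inverse. A clean way to produce such an $\iota$ is to fix a countable Borel partition $\{B_n\}$ of $\R$ into sets of diameter $\le\eta/2$, and on each $B_n$ apply an affine contraction into a subinterval $I_n\subseteq B_n$ of length $\le\eta/2$, where the intervals $I_n$ are chosen pairwise disjoint; then $\iota$ is injective, $|\iota(x)-x|\le\eta$, and $\iota^{-1}$ extends to a Borel map on $\R$.

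Concretely, I would proceed by induction on $t$. Write $\pi$-a.s.\ $Y_t = h_t(X_1,\dots,X_T, Y_1,\dots,Y_{t-1})$ for Borel functions $h_t$ (disintegration of the causal coupling); causality of $\pi$ lets us in fact take $Y_t$ to be a function of $(X_1,\dots,X_t,Y_1,\dots,Y_{t-1})$ up to $\pi$-null sets, but this refinement is not strictly needed. Having constructed $Y^\delta_1,\dots,Y^\delta_{t-1}$ with the stated properties, we want $Y^\delta_t$ to be $\F^{X,Y}_t$-measurable, $\delta$-close to $Y_t$, and such that $X_t$ is $\sigma(Y^\delta_t)$-measurable. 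Since $X_t$ is real-valued, it suffices to make $Y^\delta_t$ encode a Borel-isomorphic copy of the pair $(Y_t, X_t)$ in a $\delta$-small neighbourhood of $Y_t$: pick a Borel isomorphism $\Phi\colon\R\times\R\to\R$ onto a Borel subset, precompose with a contraction to squeeze the image into $B_{\delta/2}(0)$, and set $Y^\delta_t := Y_t + \Phi(\text{contracted }(Y_t,X_t))$; this is $\F^{X,Y}_t$-measurable because $Y_t$ and $X_t$ both are, it satisfies $|Y^\delta_t-Y_t|\le\delta$, and one recovers $X_t$ (and $Y_t$) from $Y^\delta_t$ by a Borel map, giving $\sigma(Y^\delta_t)$-measurability of $X_t$.

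Once $Y^\delta$ is constructed, the ``in particular'' is immediate: $\pi^\delta=(X,Y^\delta)_\ast\pi$ has first marginal $P$ by construction, second marginal $Q^\delta:=Y^\delta_\ast\pi$ by definition, so it is a coupling of $P$ and $Q^\delta$. Causality of $\pi^\delta$ in the $X\to Y^\delta$ direction follows because each $Y^\delta_t$ is $\F^{X,Y}_t$-measurable and hence, by causality of $\pi$, a function of $(X_1,\dots,X_t)$ and independent noise, which is exactly causality; causality in the reverse direction follows from the additional property that $X_t$ is $\sigma(Y^\delta_t)$-measurable for each $t$, so the filtration $\F^{Y^\delta}$ contains $\F^X$ and the roles of $X$ and $Y^\delta$ can be exchanged in \eqref{eq:causal}. (One should invoke a characterization of bicausality such as \cite[Lemma 2.2]{bartl2021wasserstein} here.)

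The main obstacle is the bookkeeping around measurability: ensuring that the perturbation maps can be chosen Borel, that the ``encoding'' of $X_t$ into $Y^\delta_t$ does not break the causal (one-directional) structure that $\pi$ already has, and that the reverse causality claim is correctly justified rather than merely asserted. In other words, the conceptual content is the one-line idea ``perturb $Y$ slightly so it remembers $X$'', but making the two causality statements for $\pi^\delta$ precise — especially verifying \eqref{eq:causal} with $X$ and $Y^\delta$ interchanged — requires care, and is where I would spend most of the write-up.
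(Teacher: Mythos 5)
Your overall strategy is the paper's: perturb $Y_t$ within distance $\delta$ so that it ``remembers'' $X_t$, then read off bicausality from the two measurability properties. The bicausality discussion at the end is fine (the paper simply declares it clear). However, the concrete perturbation in your inductive step has a genuine gap: from
\[
Y^\delta_t \;=\; Y_t + \Phi\bigl(\text{contracted }(Y_t,X_t)\bigr)
\]
one cannot in general recover $X_t$ by a Borel map. Writing $\iota(y,x)$ for your injection of $\R^2$ into $B_{\delta/2}(0)$, the map $(y,x)\mapsto y+\iota(y,x)$ need not be injective: since the exact value of $Y_t$ is retained as the ``carrier'' and the perturbation lives in the same scale, distinct pairs $(y_1,x_1)\neq(y_2,x_2)$ with $y_2=y_1+\iota(y_1,x_1)-\iota(y_2,x_2)$ produce the same $Y^\delta_t$, and nothing in the construction excludes such collisions. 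Your ``elementary building block'' from the first paragraph does not repair this either, because it injects $\R$ into $\R$, whereas here you must inject the \emph{pair} $(Y_t,X_t)\in\R^2$ into a $\delta$-neighbourhood of $Y_t$.

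The fix is exactly what the paper does: sacrifice the exact value of $Y_t$ by discretizing it. Set $Y^\delta_t:=\phi_\delta(Y_t)+\psi_\delta(X_t)$, where $\phi_\delta(y)=\max\{\delta k: \delta k\le y\}\in\delta\mathbb{Z}$ and $\psi_\delta$ is a Borel isomorphism of $\R$ onto $(0,\delta)$. Then the grid part of $Y^\delta_t$ determines $\phi_\delta(Y_t)$ and the fractional part determines $\psi_\delta(X_t)$, hence $X_t$; moreover $|Y^\delta_t-Y_t|\le\delta$ and $Y^\delta_t$ is $\F^{X,Y}_t$-measurable. Note the lemma only asks that $X_t$ (not $Y_t$) be $\sigma(Y^\delta_t)$-measurable, so losing $Y_t$ beyond its $\delta$-grid value is harmless. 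With this replacement your argument, including the verification of bicausality, goes through.
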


\begin{proof}
	For $\delta>0$ we consider the Borel mappings
	\begin{align*}
	\psi_\delta &\colon\mathbb{R} \to (0,\delta) \text{ and}\\
	\phi_\delta &\colon\mathbb{R}\to \delta \mathbb{Z} :=\{ \delta k : k\in \mathbb{Z} \}  
	\end{align*}
	where $\psi_\delta$ is a (Borel-)isomorphism and $\phi_\delta(x):=\max\{ \delta k : \delta k \leq x\}$.
	For $t=1,\dots,T$, set
	\[ Y_t^\delta :=   \phi_\delta(Y_t) + \psi_\delta(X_t). \]
	By definition $X_t$ is $\sigma(Y_t^\delta)$-measurable, $Y^\delta_t$ is $\mathcal{F}^{X,Y}_{t}$-measurable, and $|Y_t^\delta-Y_t|\leq \delta$.
	It remains to note that the bicausality constraints \eqref{eq:causal} are clearly satisfied.
\end{proof}

The following proof serves as a blue print for the proofs of Theorems \ref{thm:sensitivity_opt} and \ref{thm:opt_stopping}.

\begin{proof}[Proof of Theorem \ref{thm:main}]
To simplify notation, set 
\[F_t:=E_P[ \partial_{x_t} f(X) | \mathcal{F}^X_t] \quad\text{for } t=1,\dots, T.\]
We first prove the \emph{upper bound}, that is
\begin{align}
\label{eq:simple.upper.bound}
\limsup_{r\to 0} \frac{1}{r} \Big( \sup_{Q\in B_r(P)} E_Q[f(X)] - E_P[f(X)] \Big)
\leq \Big( \sum_{t=1}^T E_P[ |F_t|^q ] \Big)^{1/q} .
\end{align}
To that end, for any $r>0$, let $Q=Q^r\in B_r(P)$ be such that 
\[E_Q[f(X)]\geq \sup_{R\in B_r(P)} E_R[f(X)] - o(r),\]
and let $\pi=\pi^r$ be an (almost) optimal bicausal coupling between $P$ and $Q$, i.e.,
\[ \Big( \sum_{t=1}^T E_{\pi}[ |X_t-Y_t|^p]\Big)^{1/p} 
\leq \mathcal{AW}_p(P,Q) + o(r)
\leq r + o(r).\]
The fundamental theorem of calculus and Fubini's theorem imply
\begin{align*}
E_Q[f(X)]-E_P[f(X)]
&=  E_\pi[ f(Y)-f(X)]   \\
&=\sum_{t=1}^T \int_0^1 E_\pi[ \partial_{x_t}f (X+\lambda (Y-X))\cdot (Y_t-X_t) ]\, d\lambda.
\end{align*}
Moreover, by the tower property and H\"older's inequality,
\begin{align*} 
&E_\pi[ \partial_{x_t}f (X+\lambda(Y-X)) \cdot (Y_t-X_t)]\\
&= E_\pi\Big[ E_\pi[\partial_{x_t}f (X+\lambda (Y-X))|\mathcal{F}_{t}^{X,Y}] \cdot (Y_t-X_t)\Big]\\
&\leq E_\pi\Big[ |E_\pi[\partial_{x_t}f (X+\lambda (Y-X))|\mathcal{F}_{t}^{X,Y}]|^q\Big]^{1/q}\cdot  E_\pi[|Y_t-X_t|^p]^{1/p}.
\end{align*}
We next claim that, for every $\lambda\in[0,1]$,
\begin{align}\label{eq:main_2}
E_\pi\Big[ \Big|E_\pi[\partial_{x_t}f (X+\lambda (Y-X) )|\mathcal{F}_{t}^{X,Y}]\Big|^q\Big]^{1/q}
\to E_P[ |F_t|^q]^{1/q}
\end{align}
as $r \to 0$. 
Indeed, since $\pi$ is bicausal, we have that 
\[ E_\pi[\partial_{x_t}f (X)|\mathcal{F}_{t}^{X,Y}]
= E_P[\partial_{x_t}f (X)|\mathcal{F}_{t}^{X}]
=F_t\]
$\pi$-almost surely, see, e.g., \cite[Lemma 2.2]{bartl2021wasserstein}.
Therefore, Jensen's inequality shows that
\begin{align*}
&E_\pi\Big[ \Big| E_\pi[\partial_{x_t}f (X+\lambda (Y-X) )|\mathcal{F}_{t}^{X,Y}] - F_t \Big|^q\Big]  \\
&\leq  E_\pi\Big[ | \partial_{x_t}f (X+\lambda (Y-X) ) - \partial_{x_t}f (X) |^q\Big]
\end{align*}
which converges zero; this follows from the continuity of $\partial_{x_t} f$, since $\sum_{t=1}^T E_\pi[ |X_t-Y_t|^p]\to 0$, and since $|\partial_{x_t} f(x)|^q \leq \tilde{c}(1+\sum_{s=1}^T |x_s|^p)$ by the growth assumption and since $q(p-1)=p$.
Then \eqref{eq:main_2}  follows from the triangle inequality.

We conclude that 
\begin{align*}
E_Q[f(X)] - E_P[f(X)]
&\leq \sum_{t=1}^T  \Big( E_P[ |F_t|^q]^{1/q} + o(1) \Big)  E_\pi[|Y_t-X_t|^p]^{1/p} \\
&\leq \Big( \sum_{t=1}^T  E_P[ |F_t|^q] +o(1)\Big)^{1/q} \Big( \sum_{t=1}^T  E_\pi[|Y_t-X_t|^p]\Big)^{1/p},
\end{align*}
where the second inequality follows from H\"older's inequality between $\ell^q(\mathbb{R}^T)$ and $\ell^p(\mathbb{R}^T)$.
Recalling that $\pi$ is an almost optimal bicausal coupling between $P$ and $Q$ and that $\mathcal{AW}_p(P,Q)\leq r$, this shows \eqref{eq:simple.upper.bound}.

\vspace{0.5em}
It remains to prove the \emph{lower bound}, that is, 
\begin{align}
\label{eq:simple.lower.bound}
\liminf_{r\to 0}  \frac{1}{r} \Big( \sup_{Q\in B_r(P)} E_Q[f(X)] - E_P[f(X)] \Big)
\geq \Big(\sum_{t=1}^T E_P[ |F_t|^q ] \Big)^{1/q}  .
\end{align}
To that end, we first use the duality between $\|\cdot\|_{\ell^q(\mathbb{R}^T)}$ and $\|\cdot\|_{\ell^p(\mathbb{R}^T)}$, which yields the existence of $a\in[0,\infty)^T$ satisfying
\[
\Big(\sum_{t=1}^T  E_P[ |F_t|^q]\Big)^{1/q} 
= \sum_{t=1}^T  E_P[|F_t|^q]^{1/q} a_t
\quad\text{ and } 
\sum_{t=1}^T a_t^p=1.
\]
Next we use duality between $\|\cdot\|_{L^q(P)}$ and $\|\cdot\|_{L^p(P)}$ which yields the existence of random variables $(Z_t)_{t=1}^T$ satisfying
\[ E_P[|F_t|^q]^{1/q} a_t
=E_P[F_t Z_t] \quad \text{and }
E_P[|Z_t|^p]^{1/p}=a_t
\]
for $t=1,\dots,T$.
Combining both results,
\begin{align}\label{eq:hoelder.lower}
\sum_{t=1}^T E_P[ F_t Z_t]
=\Big(\sum_{t=1}^T  E_P[ |F_t|^q]\Big)^{1/q} 
\quad\text{and}\quad
\sum_{t=1}^T E_P[|Z_t|^p] =1.
\end{align}
Note that, since $F_t$ is $\mathcal{F}^X_t$-measurable, $Z_t$ can be chosen $\mathcal{F}^X_t$-measurable as well.

At this point, for fixed $r>0$, we would like to define $Q^r$ as the law of $X+rZ$ and $\pi=\pi^r$ as the law of $(X,X+rZ)$.
Since $Z_t$ is $\mathcal{F}^X_t$-measurable, $\pi$ is clearly causal.
Unfortunately however, it does not need to be bicausal in general.
We thus first apply  Lemma \ref{lem:bi-causal} to  $P$ and $Q=Q^r$ with $\delta=1/n$, which yields measures $Q^n=Q^{r,n}$ and processes $Y^n=Y^{r,n}$ which satisfy the assertion of Lemma \ref{lem:bi-causal}.

Now fix $r>0$. 
Since 
\begin{align*}
\mathcal{AW}_p(P,Q^n)
&\leq \Big( \sum_{t=1}^T  E_{\pi}[ |X_t-Y_t^n|^p] \Big)^{1/p}\\
&\leq \Big( \sum_{t=1}^T (ra_t)^p  \Big)^{1/p} +\frac{T}{n}
=r +  \frac{T}{n},
\end{align*}
we have that, for every $\varepsilon>0$, 
\[\sup_{R\in B_{r+\varepsilon}(P)} E_R[f(X)] - E_P[f(X)]
\geq \lim_{n \to \infty} E_{\pi}[f(Y^n)-f(X) ]. \]
Using the fundamental theorem of calculus and Fubini's theorem as before, the fact that $Y^n_t$ is $\mathcal{F}^X_t$-measurable  shows that
\begin{align*}
E_{P}[f(Y^n)-f(X) ] 
&= \sum_{t=1}^T  \int_0^1 E_{P}\Big[  \partial_{x_t} f(X+\lambda (Y^n_t-X_t)) \cdot (Y^n_t-X_t) \Big]\,d\lambda\\
&=\sum_{t=1}^T  \int_0^1 E_{P}\Big[  E_{P}[ \partial_{x_t} f(X+\lambda (Y^n_t-X_t)) |\F_t^{X}] \cdot (Y^n_t-X_t)\Big]\,d\lambda\\
&\to \sum_{t=1}^T \int_0^1 E_{P}\Big[ E_P[  \partial_{x_t} f(X+\lambda rZ) |\F_t^{X}]\cdot rZ_t \Big]\,d\lambda,
\end{align*}
as $n\to\infty$, by the growth assumption since $Y^n_t-X_t\to Z_t$ in $L^p(P)$.

In a final step we let $r\to0$.
Applying the previous step to $\varepsilon=o(r)$ shows that
\begin{align*}
&\liminf_{r\to 0} \frac{ 1}{r}\Big( \sup_{R\in B_{r}(P)} E_R[f(X)] - E_P[f(X)] \Big) \\
&\geq \liminf_{r\to 0} \sum_{t=1}^T \int_0^1 E_{P}\big[ E_P [ \partial_{x_t} f(X+\lambda rZ) |\F_t^{X}]\cdot Z_t \big]\,d\lambda 
\\
&= \sum_{t=1}^T  E_{P}\big[ E_P[  \partial_{x_t} f(X) |\F_t^{X}]\cdot Z_t \big]\,d\lambda,
\end{align*}
where the equality holds by using the growth assumption on $|\partial_{x_t} f|$.
Recalling the choice of $(Z_t)_{t=1}^T$  (see \eqref{eq:hoelder.lower}) completes the proof.
\end{proof}

\subsection{Proof of Theorem \ref{thm:sensitivity_opt}}

The proof of Theorem \ref{thm:sensitivity_opt} has a similar  structure as the proof of Theorem \ref{thm:main}, but some additional arguments have to be made in order to take care of the optimization in $a\in \mathcal{A}$.
Throughout, we work under Assumption \ref{ass:optim.f.strict.convex}. We start with two auxiliary results.

\begin{lemma}
\label{lem:optim.continuity}
	Let $(Q_n)_{n\in \mathbb{N}}$ be such that $\mathcal{AW}_p(P,Q_n)\to 0$ for $n\to \infty$.
	Then $v(Q^n)\to v(P)$.
\end{lemma}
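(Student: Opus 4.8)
The plan is to establish continuity of $Q \mapsto v(Q)$ along an $\mathcal{AW}_p$-convergent sequence by combining a two-sided comparison argument with the Lipschitz-type estimates that the Wasserstein-transport structure provides, together with the strong convexity in Assumption \ref{ass:optim.f.strict.convex} to control the optimizers. First I would record the easy direction of continuity of $v$ itself. Fix $n$ and let $\pi_n$ be an almost-optimal bicausal coupling between $P$ and $Q_n$. Given any $a\in\mathcal{A}$, we may view $a$ as a predictable control on either coordinate; using bicausality of $\pi_n$ together with the fundamental theorem of calculus and the growth bound on $\partial_{x_t} f$ in Assumption \ref{ass:optim.f.strict.convex} (exactly as in the proof of Theorem \ref{thm:main}, where the controls are now frozen at $a$), one gets
\[
\big| E_{Q_n}[f(X,a(X))] - E_P[f(X,a(X))] \big| \le C\,\Big(\sum_{t=1}^T E_{\pi_n}[|X_t-Y_t|^p]\Big)^{1/p} \le C\,(\mathcal{AW}_p(P,Q_n)+o(1)),
\]
where $C$ depends only on the constant $c$, on $L$, and on the $p$-th moment of $P$ (which is controlled uniformly in $n$ since $\mathcal{AW}_p(P,Q_n)$ is bounded). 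Crucially $C$ is \emph{uniform in $a\in\mathcal{A}$} because $a$ only enters through $f(\cdot,a)$ and the growth bound is uniform over $a\in[-L,L]^T$. Taking the infimum over $a\in\mathcal{A}$ on both sides yields $|v(Q_n)-v(P)| \le C\cdot(\mathcal{AW}_p(P,Q_n)+o(1)) \to 0$, which is the claim.

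More carefully, the subtlety is that an almost-optimal control for $v(P)$ is a control defined on the path space carrying $P$, and I want to plug it into $E_{Q_n}$; this is legitimate because $\mathcal{A}$ is a fixed set of Borel functions on $\R^T$ and both $v(P)$ and $v(Q_n)$ are infima over the same $\mathcal{A}$. So the argument is simply: for each $\eta>0$ pick $a^\eta$ with $E_P[f(X,a^\eta(X))] \le v(P)+\eta$, then
\[
v(Q_n) \le E_{Q_n}[f(X,a^\eta(X))] \le E_P[f(X,a^\eta(X))] + C(\mathcal{AW}_p(P,Q_n)+o(1)) \le v(P)+\eta + C(\mathcal{AW}_p(P,Q_n)+o(1)),
\]
and symmetrically, using that $v(P)$ admits the optimizer $a^\ast$ guaranteed by the first part of Theorem \ref{thm:sensitivity_opt} (or, if one prefers not to invoke existence yet, an $\eta$-optimizer for $v(Q_n)$, being an element of the fixed set $\mathcal{A}$, can be transported back to $P$): $v(P) \le E_P[f(X,a^\ast(X))] = v(Q_n) + (E_P - E_{Q_n})[f(X,a^\ast(X))] \le v(Q_n) + C(\mathcal{AW}_p(P,Q_n)+o(1))$. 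Letting $n\to\infty$ and then $\eta\downarrow 0$ gives $v(Q_n)\to v(P)$.

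The main obstacle, and the reason this lemma is stated separately rather than folded into the proof of Theorem \ref{thm:sensitivity_opt}, is the uniform moment control: one needs that $\sup_n \big(\text{$p$-th moment of }Q_n\big)<\infty$ so that the constant $C$ above does not blow up. This follows because $\mathcal{AW}_p(P,Q_n)\to 0$ implies in particular $\mathcal{W}_p(P,Q_n)\to 0$ (the adapted distance dominates the ordinary one), hence $(\|X\|)_{L^p(Q_n)}$ is bounded. With that in hand the estimate $|\partial_{x_t} f(x,a)|^q \le \tilde c(1+\sum_s |x_s|^p)$ (using $q(p-1)=p$) is integrable uniformly along the coupling, exactly as in the proof of Theorem \ref{thm:main}, and the H\"older/Fubini manipulation goes through verbatim with $a$ frozen. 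I would therefore present the proof as: (i) reduce to bounding $|E_{Q_n}[f(X,a(X))]-E_P[f(X,a(X))]|$ uniformly in $a\in\mathcal{A}$; (ii) prove that bound by the transport argument of Theorem \ref{thm:main} applied coordinatewise with $a$ fixed, invoking uniform $p$-th moments; (iii) pass to the infimum over $\mathcal{A}$ to conclude.
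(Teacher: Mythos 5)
There is a genuine gap in step (i)/(ii): the uniform-in-$a$ bound
$\bigl|E_{Q_n}[f(X,a(X))]-E_P[f(X,a(X))]\bigr|\le C\,\mathcal{AW}_p(P,Q_n)+o(1)$ is false, and the justification ``with the controls frozen at $a$'' conceals exactly the point where it breaks. Writing the difference along the coupling gives $E_{\pi_n}[f(Y,a(Y))-f(X,a(X))]$, in which \emph{both} arguments change: the path ($Y$ vs.\ $X$) and the control value ($a(Y)$ vs.\ $a(X)$). The fundamental-theorem-of-calculus argument from Theorem \ref{thm:main} only controls the change in the path variable with the control held fixed, i.e.\ it bounds $E_{\pi_n}[f(Y,a(X))-f(X,a(X))]$, which is not the quantity you need. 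Since $a\in\mathcal{A}$ is merely a bounded Borel function, $|a(Y)-a(X)|$ admits no modulus of continuity in $|Y-X|$; a rapidly oscillating $a$ produces an order-one discrepancy between $E_{Q}[f(\cdot,a(\cdot))]$ and $E_P[f(\cdot,a(\cdot))]$ even when $\mathcal{AW}_p(P,Q)$ is tiny (e.g.\ $T=2$, $f(x,a)=a_2x_2$, $a_2(x_1)=\operatorname{sign}(\sin(kx_1))$, and $Q$ a shift of $P$ by $\pi/k$ in the first coordinate). So one cannot take the infimum over $a$ of a uniform estimate; that route is exactly the reason $v$ fails to be continuous for the ordinary Wasserstein distance.

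The missing idea — and the reason your proposal never invokes the convexity of $f(x,\cdot)$, which should be a warning sign — is that the near-optimal control $a$ for $v(P)$ must be \emph{replaced} by a different admissible control on the $Q$-side before comparing. The paper sets $b_t:=E_\pi[a_t(X)\,|\,\mathcal{F}_T^Y]$; bicausality guarantees $b_t$ is $\mathcal{F}_{t-1}^Y$-measurable (so $b\in\mathcal{A}$), and Jensen's inequality for the convex map $f(y,\cdot)$ gives $v(Q)\le E_\pi[f(Y,b(Y))]\le E_\pi[f(Y,a(X))]$. Only now is the control genuinely frozen at $a(X)$, and the FTC/H\"older/growth argument you describe applies verbatim to $E_\pi[f(Y,a(X))]-E_P[f(X,a(X))]$. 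Your surrounding structure (two-sided comparison, uniform moment control via $\mathcal{W}_p\le\mathcal{AW}_p$, using $q(p-1)=p$) is fine and matches the paper, but without the conditional-expectation/Jensen step the central estimate does not hold.
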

\begin{proof}
	Let $Q\in\mathcal{P}_p(\R^T)$ and let $\pi$ be a bicausal coupling between $P$ and $Q$.
	Let $\varepsilon>0$ be arbitrary and fix $a\in\mathcal{A}$ that satisfies $E_P[f(X,a(X))]\leq v(P)+\varepsilon$.
	Next define $b$ by 
	\[b_t:=E_\pi[a_t(X) | \F_T^Y] \qquad\text{for } t=1,\dots,T.\]
	By bicausality, $b_t$ is actually measurable with respect to $\F_{t-1}^Y$ (see, e.g., \cite[Lemma 2.2]{bartl2021wasserstein}) and clearly $|b_t|\leq L$; thus $b\in\mathcal{A}$.
	Moreover, convexity of $f(x,\cdot)$ implies that
	\begin{align*}
	 v(Q)
	&\leq E_Q[f(Y,b(Y))] 
	=E_\pi[ f(Y,E_\pi[a(X) | \F_T^Y])] 
	\leq E_\pi[ f(Y,a(X))].  
	\end{align*}
	The fundamental theorem of calculus and H\"older's inequality yield
	\begin{align*}
	 &E_\pi[ f(Y,a(X))] - E_P[   f(X,a(X))] \\
	&= \int_0^1 \sum_{t=1}^TE_\pi\Big[   \partial_{x_t}  f(X+\lambda (Y-X), a(X) )  (Y_t-X_t)  \Big]\, d\lambda\\
	&\le \int_0^1 \sum_{t=1}^T E_{\pi}[|\partial_{x_t}  f(X+\lambda (Y-X), a(X) )|^q]^{1/q} E_{\pi} [|Y_t-X_t|^p]^{1/p} \, d\lambda.
	\end{align*}
	Using the growth assumption and arguing as in the proof of Theorem \ref{thm:main}, the last term is at most of order $\mathcal{AW}_p(P,Q)$.
	As $\varepsilon$ was arbitrary, this shows  $v(Q) -v(P) \leq  O(\mathcal{AW}_p(P,Q))$ (where again $O$ denotes the Landau symbol) and reversing the roles of $P$ and $Q$ completes the proof.
\end{proof}

\begin{lemma}
\label{lem:optim.existence.unique}
	There exists exactly one $a^\ast\in\mathcal{A}$ such that $v(P)=E_P[f(X,a^\ast(X))]$.
\end{lemma}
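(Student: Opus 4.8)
The plan is to obtain existence by the direct method in the Hilbert space $L^2(P;\R^T)$ and uniqueness from the strong convexity built into Assumption~\ref{ass:optim.f.strict.convex}. As a preliminary, I would record that $\Phi(a):=E_P[f(X,a(X))]$ is finite for every $a\in\mathcal{A}$ and that $-\infty<v(P)<\infty$: since $f(\cdot,a)\in C^1$, the fundamental theorem of calculus gives $f(x,a)=f(0,a)+\int_0^1\sum_{s=1}^T\partial_{x_s}f(\lambda x,a)\,x_s\,d\lambda$, and the growth assumption on $\sum_s|\partial_{x_s}f|$ together with the boundedness of $f(0,\cdot)$ on the compact set $[-L,L]^T$ yields a constant $C>0$ with $|f(x,a)|\le C\bigl(1+\sum_{s=1}^T|x_s|^p\bigr)$ for all $x\in\R^T$ and all $a\in[-L,L]^T$. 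As $P\in\mathcal{P}_p(\R^T)$, this makes $f(X,a(X))$ $P$-integrable, uniformly in $a\in\mathcal{A}$, and bounds $v(P)$ from below.

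For existence I would view $\mathcal{A}$ as a subset of $H:=L^2(P;\R^T)$ (identifying controls that agree $P$-a.s.). It is convex, norm-bounded by $\sqrt{T}\,L$, and norm-closed, because $|a_t|\le L$ is a closed pointwise constraint and ``$a_t$ is $\mathcal{F}^X_{t-1}$-measurable'' is the closed linear condition $a_t=E_P[a_t\mid\mathcal{F}^X_{t-1}]$; hence $\mathcal{A}$ is weakly compact in $H$. The functional $\Phi$ is convex on $\mathcal{A}$ since $f(x,\cdot)$ is convex, and norm-continuous on $\mathcal{A}$: if $a^n\to a$ in $H$ then $a^{n_k}\to a$ $P$-a.s.\ along a subsequence, so $\Phi(a^{n_k})\to\Phi(a)$ by continuity of $f(x,\cdot)$ and dominated convergence with the bound above, and a routine subsequence argument upgrades this to $\Phi(a^n)\to\Phi(a)$. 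A convex norm-continuous functional has norm-closed, hence weakly closed, sublevel sets, so $\Phi$ is weakly lower semicontinuous and therefore attains its infimum on the weakly compact set $\mathcal{A}$; this produces $a^\ast\in\mathcal{A}$ with $E_P[f(X,a^\ast(X))]=v(P)$.

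For uniqueness, suppose $a^\ast,b^\ast\in\mathcal{A}$ both attain $v(P)$. From $\nabla^2_a f(X,\cdot)\succ\varepsilon(X)I$ on $[-L,L]^T$ and the parallelogram identity one gets, for $P$-a.e.\ $x$ and all $a,b\in[-L,L]^T$,
\[
f\Bigl(x,\tfrac{a+b}{2}\Bigr)\;\le\;\tfrac12 f(x,a)+\tfrac12 f(x,b)-\tfrac{\varepsilon(x)}{8}\,|a-b|^2 .
\]
Evaluating at $a=a^\ast(X)$, $b=b^\ast(X)$, taking $E_P$, and using that $\tfrac12(a^\ast+b^\ast)\in\mathcal{A}$ (so its value is $\ge v(P)$) gives $v(P)\le v(P)-\tfrac18 E_P[\varepsilon(X)\,|a^\ast(X)-b^\ast(X)|^2]$, whence $E_P[\varepsilon(X)\,|a^\ast(X)-b^\ast(X)|^2]=0$. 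Since $P(\varepsilon(X)>0)=1$, this forces $a^\ast(X)=b^\ast(X)$ $P$-a.s., i.e.\ $a^\ast=b^\ast$ as elements of $\mathcal{A}$.

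No step here is deep; the points that require care are the verification that $\mathcal{A}$ is weakly closed in $L^2(P;\R^T)$ (predictability must survive passage to weak limits), the convexity/continuity bookkeeping that yields weak lower semicontinuity of $\Phi$, and the uniform $p$-th moment bound on $f(x,a)$ that makes the optimization problem well-posed in the first place. I expect the weak-closedness of $\mathcal{A}$ together with the weak lower semicontinuity of $\Phi$ to be the part most worth spelling out carefully.
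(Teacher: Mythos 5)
Your proof is correct, but for existence it takes a genuinely different (though equally standard) route from the paper's. The paper's proof of Lemma \ref{lem:optim.existence.unique} is a one-liner: existence via Koml\'os' lemma, uniqueness via strict convexity. The Koml\'os route takes a minimizing sequence $(a^n)\subseteq\mathcal{A}$, extracts a subsequence whose Ces\`aro means converge $P$-almost surely, uses convexity of $f(x,\cdot)$ to keep the means minimizing, and passes to the limit by dominated convergence (with the same growth bound $|f(x,a)|\le C(1+\sum_s|x_s|^p)$ that you derive); the a.s.\ limit inherits predictability and the bound $L$. You instead run the direct method in the Hilbert space $L^2(P;\R^T)$: weak compactness of the convex, bounded, norm-closed set $\mathcal{A}$ together with weak lower semicontinuity of the convex, norm-continuous functional $a\mapsto E_P[f(X,a(X))]$. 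Both arguments turn on the same two facts you rightly isolate --- that predictability and $|a_t|\le L$ survive the limit, and that the growth condition in Assumption \ref{ass:optim.f.strict.convex} makes $f(X,a(X))$ integrable uniformly over $\mathcal{A}$. Koml\'os' lemma is the more robust tool (it needs only $L^1$-boundedness and yields an almost-sure limit, which matters when controls are unbounded or the underlying space is not reflexive), whereas your Hilbert-space argument is arguably cleaner here precisely because the controls are uniformly bounded. Your uniqueness step, via the parallelogram identity applied to $f(x,\cdot)-\tfrac{\varepsilon(x)}{2}|\cdot|^2$, is a correct quantitative rendering of the paper's appeal to strict convexity: the hypothesis $\nabla_a^2 f(X,\cdot)\succ\varepsilon(X)I$ with $P(\varepsilon(X)>0)=1$ is exactly what makes that one-line claim work.
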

\begin{proof}
	This is a standard result.
	The existence follows from Komlos' lemma \cite{komlos1967generalization} and uniqueness from strict convexity.	
\end{proof}

\begin{proof}[Proof of Theorem \ref{thm:sensitivity_opt}]
Let $a^\ast\in\mathcal{A}$ be the unique optimizer of $v(P)$ (see Lemma \ref{lem:optim.existence.unique}) and, for shorthand notation, set $ F_t:=  E_P[ \partial_{x_t} f(X,a^\ast(X)) | \mathcal{F}_t^X]$ for $t=1,\dots,T$.

We first prove the \emph{upper bound}.
We claim that it follows from combining the reasoning in the proof of Theorem \ref{thm:main} and Lemma \ref{lem:optim.continuity}.
Indeed, let $Q^r\in B_r(P)$ be such that $$v(Q^r) \geq \sup_{Q\in B_r(P)} v (Q) -o(r)$$ and let $\pi=\pi^r$ be a bicausal coupling between $P$ and $Q^r$ that is (almost) optimal for $\mathcal{AW}_p(P,Q^r)$.
Define $b^r\in\mathcal{A}$ by $b^r:=E_{\pi}[a^\ast(X) |\F_T^Y]$ and use convexity of $f(x,\cdot)$ to conclude that
\[ v(Q^r)\leq E_\pi[ f(Y,b^r(Y)) ]
\leq E_\pi[f(Y,a^\ast(X))]. \]
From here on, it follows from the fundamental theorem of calculus and H\"older's inequality just as in the proof of Theorem \ref{thm:main} that 
\begin{align*}
v(Q^r) - v(P) 
&\leq \sum_{t=1}^T E_\pi[\partial_{x_t} f(X,a^\ast(X)) \cdot (Y_t-X_t)] + o(r) \\
&\leq r \Big( \sum_{t=1}^T E_\pi[ |F_t|^q] \Big)^{1/q} +o(r).
\end{align*}
This completes the proof of the upper bound.

\vspace{0.5em}
We proceed with the \emph{lower bound}.
To that end, we start with the same construction as in the proof of Theorem \ref{thm:main}: 
let $\pi=\pi^r$ be the law of $(X,X+rZ)$ where $Z$ satisfies \eqref{eq:hoelder.lower}, that is, $Z_t$ is $\mathcal{F}_t^X$-measurable for every $t$ such that $$\sum_{t=1}^T E_P[|Z_t|^p]\leq 1\quad \text{and}\quad   \Big( \sum_{t=1}^T E_P[ |F_t|^q] \Big)^{1/q}
=\sum_{t=1}^T E_P[F_t Z_t] .$$
Again, $\pi$ might  be only  causal and not bicausal, and we need to rely on Lemma \ref{lem:bi-causal}. 
For the sake of a clearer presentation, we ignore this step this time.

For each $r$, let $a^r\in\mathcal{A}$ be almost optimal for $v(Q^r)$, that is $$E_{Q^r}[f(Y,a^r(Y))]\leq v(Q^r) +o(r).$$
Observe that,  by construction of $Q^r$ (i.e.,\ since $Z_t$ is $\mathcal{F}_t^X$-measurable for each $t$), there is  $b^r\in\mathcal{A}$ such that $\pi(a^r(Y)= b^r(X))=1$.

	Now let $(r_n)_n$ be an arbitrary sequence that converges to zero.
By Lemma \ref{lem:optim.stategies.converge} below,  after passing to subsequence $(r_{n_k})_k$, $b^{r_{n_k}}(X)$ converges to $a^\ast(X)$ $P$-almost surely.
Since $$v(P)\leq E_P[f(X,b^{r_{n_k}}(X))]$$ for all $k$, the fundamental theorem of calculus and the growth assumption imply
	\begin{align*}
	v(Q^{r_{n_k}}) -v(P)
	&\geq E_P[f(X+r_{n_k} Z, b^{r_{n_k}}(X) ) - f(X, b^{r_{n_k}} (X)) ] - o(r_{n_k}) \\
	&\geq \sum_{t=1}^T E_P[  E_P[ \partial_{x_t} f(X, b^{r_{n_k}}(X)) |\mathcal{F}_t^X ] \cdot  r_{n_k} Z_t ] - o(r_{n_k}).
	\end{align*}
	Since $b^{r_{n_k}}(X)\to a^\ast(X)$ $P$-almost surely, the continuity of $\partial_{x_t} f$ and the growth assumption imply that
	\begin{align*}
	\liminf_{k\to \infty} \frac{v(Q^{r_{n_k}}) -v(P)}{ r_{n_k} }
	&\geq\sum_{t=1}^T E_P[    E_P[\partial_{x_t} f(X, a^\ast(X)) |\mathcal{F}_t^X ]  \cdot Z_t  ].
	\end{align*}
	To complete the proof, it remains to recall the choice of $Z$ and that $(r_n)_n$ was an arbitrary sequence.
\end{proof}

\begin{lemma}
\label{lem:optim.stategies.converge}
	In the setting of the proof of Theorem \ref{thm:sensitivity_opt}:
	there exists  a subsequence $(r_{n_k})_k$ such that $b^{r_{n_k}}(X)\to a^\ast(X)$ $P$-almost surely.	
\end{lemma}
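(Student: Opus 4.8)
The plan is to run the classical stability argument for strongly convex optimization problems on top of the estimates already developed above. I would work in the simplified setting of the proof of Theorem~\ref{thm:sensitivity_opt}, so that $\pi=\pi^r$ is the law of $(X,X+rZ)$, $Q^r$ is the law of $X+rZ$, and $b^r\in\mathcal{A}$ is the control with $b^r(X)=a^r(X+rZ)$ $P$-a.s.; the bicausal regularization of Lemma~\ref{lem:bi-causal} enters exactly as in that proof and affects nothing below. \emph{Step~1:} first I would show that $(b^r)_r$ is asymptotically optimal for $v(P)$, i.e.\ $E_P[f(X,b^r(X))]\to v(P)$ as $r\downarrow0$. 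The bound ``$\ge v(P)$'' is immediate since $b^r\in\mathcal{A}$. For the matching asymptotic upper bound I would use $E_{Q^r}[f(Y,a^r(Y))]=E_P[f(X+rZ,b^r(X))]$ to write
\[
E_P[f(X,b^r(X))]=E_{Q^r}[f(Y,a^r(Y))]-E_P\big[f(X+rZ,b^r(X))-f(X,b^r(X))\big]\le v(Q^r)+o(r)+\Delta_r,
\]
where, exactly as in the proof of Theorem~\ref{thm:main} (fundamental theorem of calculus, H\"older, and the growth bound on $\partial_{x_t}f$, using that $b^r$ is $[-L,L]^T$-valued and $X,Z\in L^p(P)$), $|\Delta_r|\le Cr$ for a constant $C$ independent of $r$. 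Since $v(Q^r)\to v(P)$ by Lemma~\ref{lem:optim.continuity}, letting $r\downarrow0$ gives $\limsup_r E_P[f(X,b^r(X))]\le v(P)$, hence convergence.

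\emph{Step~2:} next I would exploit the strong convexity in Assumption~\ref{ass:optim.f.strict.convex}. Setting $g^r:=\tfrac12(b^r+a^\ast)\in\mathcal{A}$ and using that $\nabla^2_af(x,\cdot)\succ\varepsilon(x)I$ on $[-L,L]^T$, the elementary strong-convexity inequality yields, $P$-a.s.,
\[
f\big(X,g^r(X)\big)\le\tfrac12 f\big(X,b^r(X)\big)+\tfrac12 f\big(X,a^\ast(X)\big)-\tfrac{\varepsilon(X)}{8}\big|b^r(X)-a^\ast(X)\big|^2.
\]
Integrating against $P$ and using $E_P[f(X,g^r(X))]\ge v(P)=E_P[f(X,a^\ast(X))]$ together with Step~1,
\[
\tfrac18 E_P\big[\varepsilon(X)\,|b^r(X)-a^\ast(X)|^2\big]\le\tfrac12\big(E_P[f(X,b^r(X))]-v(P)\big)\longrightarrow0.
\]
Since $\varepsilon(X)>0$ $P$-a.s., this $L^1(P)$-convergence forces $b^r(X)\to a^\ast(X)$ in $P$-probability, because for any $\eta,M>0$
\[
P\big(|b^r(X)-a^\ast(X)|^2>\eta\big)\le P\big(\varepsilon(X)<1/M\big)+\frac{M}{\eta}\,E_P\big[\varepsilon(X)\,|b^r(X)-a^\ast(X)|^2\big],
\]
and one can send first $r\downarrow0$ and then $M\to\infty$. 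Finally, from any sequence $r_n\downarrow0$ I would extract a subsequence $(r_{n_k})_k$ along which $b^{r_{n_k}}(X)\to a^\ast(X)$ $P$-almost surely, which is the assertion.

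The only step requiring genuine care is the uniform estimate $|\Delta_r|\le Cr$ in Step~1; but this is precisely the computation already performed in the proof of Theorem~\ref{thm:main} and reused in Lemma~\ref{lem:optim.continuity}, now merely with the uniformly bounded controls $b^r$ held fixed, so it poses no new difficulty, while the strongly convex stability argument of Step~2 is entirely routine. I would also carry over the bookkeeping caveat from the proof of Theorem~\ref{thm:sensitivity_opt}: $\pi^r$ is a priori only causal, so every occurrence of $Q^r$ above should be read as a bicausal regularization supplied by Lemma~\ref{lem:bi-causal}, handled verbatim as there, which leaves all estimates unchanged.
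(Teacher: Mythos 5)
Your proposal is correct and follows essentially the same route as the paper: first establish $E_P[f(X,b^r(X))]\to v(P)$ by combining the almost-optimality of $b^r$ for $v(Q^r)$, the continuity of $v$ from Lemma \ref{lem:optim.continuity}, and the fundamental-theorem-of-calculus/growth estimate, and then let strong convexity force convergence in probability (hence a.s.\ along a subsequence). The only cosmetic difference is that you deploy the strong convexity through the midpoint inequality for $g^r=\tfrac12(b^r+a^\ast)$, whereas the paper uses a second-order Taylor expansion around $a^\ast$ together with the first-order optimality condition; both yield the same bound $E_P[\varepsilon(X)|b^r(X)-a^\ast(X)|^2]\lesssim E_P[f(X,b^r(X))]-v(P)\to 0$.
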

\begin{proof}
	Recall that $b^{r_n}$ was chosen almost optimally for $v(Q^{r_n})$, hence
	\begin{align*}
	v(Q^{r_n})  
	&\geq E_{P}[ f(X+r_nZ, b^{r_n}(X))] -o(r_n)
	\geq  E_P[ f(X, b^{r_n}(X)) ] -O(r_n),
	\end{align*}
	where the last inequality holds by continuity and the growth assumptions on $f$, see the proof of Theorem \ref{thm:sensitivity_opt}.
	Next recall that $\nabla_a^2 f(X,a)\succ \varepsilon(X) I$  for $a\in [-L,L]^T$ with $P(\varepsilon(X)>0)=1$.
	In particular, a second order Taylor expansion shows that
	\begin{align*}
	&E_P[ f(X, b^{r_n}(X)) ] - E_P[f(X,a^\ast(X))]\\
	&\geq E_P\Big[ \langle \nabla_a f(X,a^\ast(X)), b^{r_n}(X)-a^\ast(X) \rangle \Big]+  E_P\Big[\frac{ \varepsilon(X) }{2} \|b^{r_n}(X)-a^\ast(X)\|_{\ell^2(\R^T)}^2 \Big].
	\end{align*}
	The first term is non-negative by  optimality of $a^\ast$.
	Thus, since $v(Q^{r_n})\to v(P)$ by Lemma \ref{lem:optim.continuity}, this implies that  the second term must converge to zero.
	As $\varepsilon$ is strictly positive, this can only happen if $b^{r_n}(X)\to a^\ast(X)$ in $P$-probability.
	Hence, after passing to a subsequence, $b^{r_n}(X)\to a^\ast(X)$ $P$-almost surely.	
\end{proof}

\subsection{Proof of Corollary \ref{cor:ut.max}}
	For shorthand notation, set $$(a\cdot x)_T:=\sum_{t=1}^T a_t(x_t-x_{t-1}).$$
	The goal is to apply Theorem \ref{thm:sensitivity_opt} to the function
	\[ f(x,a):= \ell( g(x) + (a\cdot x)_T )\]
	for $(x,a)\in\R^T\times\mathbb{R}^T$.
	To that end, we start by checking Assumption \ref{ass:optim.f.strict.convex}.
	Since $g$ continuously differentiable and $\ell$ is twice continuously differentiable, the parts of Assumption \ref{ass:optim.f.strict.convex} pertaining  to the differentiability of $f$ hold true.
	Moreover, %
	\[\langle \nabla^2_a f(x,a) u,u\rangle 
	= \ell''( g(x) + (a\cdot x)_T ) \sum_{t=1}^{T}  u_t^2 (x_{t}-x_{t-1})^2\]
	for any $u\in\mathbb{R}^T$.
	Since $\ell''>0$ and $P(X_t=X_{t-1})=0$ for every $t$ by assumption, one can readily verify that there is $\varepsilon(X)$ with $P(\varepsilon(X)>0)=1$ such that $$\nabla^2_a f(X,\cdot )\succ \varepsilon(X) I\qquad\text{on }[- L,L]^T.$$
	Next observe that 
	\[\partial_{x_t} f(x,a)=\ell'( g(x) + (a\cdot x)_T) ( \partial_{x_t} g(x) + (a_t -a_{t+1})).\]
	A quick computation involving the growth assumption on $g'$ and $\ell'$ shows that 
	\[ |\partial_{x_t} f(x,a)| \leq c\Big(1+ \sum_{s=1}^T |x_s|^{p-1}\Big)\qquad\text{ for all }x\in \mathbb{R}^T \text{ and } a\in [-L,L]^T.\]
	In particular, Assumption \ref{ass:optim.f.strict.convex} is satisfied, and the proof follows by applying Theorem \ref{thm:sensitivity_opt}.
\qed

\subsection{Proof of Theorem \ref{thm:opt_stopping}}

We start with the \emph{upper bound}.
To that end, let $\tau^\ast$ be the optimal stopping time for $s(P)$, let $Q\in B_r(P)$ be such that $s(Q)\geq \sup_{R\in B_r(P)} s(R) - o(r)$, and let $\pi$ be a (almost) optimal bicausal coupling for $\mathcal{AW}_p(P,Q)$.
Using a similar argument as in Lemma \ref{lem:optim.continuity}, we can use the coupling $\pi$ to build a stopping time $\tau$ such that $$E_{Q}[f(X, \tau(X))]\leq E_\pi[f(Y,\tau^\ast(X))]$$---see \cite[Lemma 7.1]{backhoff2020all} or \cite[Proposition 5.8]{bartl2021wasserstein} for detailed proofs.
Under the growth assumption on $f$, the fundamental theorem of calculus and Fubini's theorem yield
\begin{align*}
s(Q) - s(P)
&\leq E_{\pi}[f(Y,\tau^\ast(X)) - f(X,\tau^\ast(X))]+o(r) \\
&= \int_0^1 \sum_{t=1}^T E_{\pi}\left[\partial_{x_t} f(X +\lambda (Y - X) ,\tau^\ast(X))\cdot (Y_t - X_t) \right] \,d\lambda+ o(r) \\
&\leq r \int_0^1 \Big(\sum_{t=1}^T E_{\pi} \left[ |E_\pi[\partial_{x_t} f (X +\lambda (Y - X),\tau^\ast(X))|\F_t^{X,Y}] |^q \right]\Big)^{1/q}  \,d\lambda\\
&\quad + o(r),
\end{align*}
where the last inequality follows from H\"older's inequality and since $$\sum_{t=1}^T E_\pi[ |X_t-Y_t|^p ] \leq r^p$$ in the same way as in the proof of Theorem \ref{thm:main}.
We also conclude using similar arguments that 
\begin{align*}
\lim_{r\to 0} E_{\pi}\left[ |E_\pi[\partial_{x_t} f(X+\lambda (Y - X), \tau^\ast(X))|\F_t^{X,Y}] |^q\right]
=  E_P \left[ | E_P[\partial_{x_t} f (X, \tau^\ast)|\F_t^X]|^q\right]
\end{align*}
for every $\lambda\in[0,1]$ and every $t=1,\dots, T$.
	
\vspace{0.5em}
We proceed with the \emph{lower bound}.
To make the presentation concise, we assume here that $T=2$---the general case follows from a (somewhat tedious) adaptation of the arguments presented here.
The assumption that the optimal stopping time $\tau^\ast$ is unique implies, by the Snell envelope theorem, that $$P(f(X,1)\neq E_P[f(X,2)|\F_{1}^X])=1;$$ in particular
\begin{align}
\label{eq:property.tau.ast}
\begin{split}
\{\tau^\ast=1\}&=\{ f(X,1)<E_P[f(X,2)|\F_{1}^X]\},
\\ 
\{\tau^\ast =2\}&=\{f(X,1)>E_P[f(X,2)|\F_{1}^X]\}.
\end{split}
\end{align}
As before, set $F_t:=E_P[\partial_{x_t} f(X, \tau^\ast)|\F_t^X]$ and take $Z$ that satisfies \eqref{eq:hoelder.lower}, i.e., $Z_t$ is $\mathcal{F}_t^X$-measurable for every $t$, and 
\begin{align}
\label{eq:def.Z}
\begin{split}
 E_P[|Z_1|^p]+E_P[|Z_2|^p]&\leq 1 \quad\text{and}\\
E_P[F_1 Z_1] + E_P[F_2 Z_2] &=( E_P[ |F_1|^q]  +E_P[ |F_2|^q] )^{1/q} .
\end{split}
\end{align}
Next, for every $r>0$, set
\begin{align*}
A^r&:=\{  f(X + r Z,1)<E_P[f(X+rZ,2)|\F_{1}^X] \}\cap \{\tau^\ast=1\},\\
B^r&:=\{  f(X+rZ,1 )>E_P[f(X+ rZ,2)|\F_{1}^X] \}\cap \{\tau^\ast=2\}.
\end{align*}
Define the process 
$$X^r:=X+rZ \mathbf{1}_{A^r\cup B^r}.$$
Since $A^r,B^r$ and $Z_1$ are $\F_1^X$-measurable,  the coupling $\pi^r:=(X,X^r)_\ast P$ is causal between $P$ and $P^r:= (X^r)_\ast P$.
Using Lemma \ref{lem:bi-causal} (just as in the proof of Theorem \ref{thm:main}), we can actually assume without loss of generality that $\pi^r$ is in fact bicausal and that $\mathcal{F}^{X^r}_t=\mathcal{F}^X_t$ for each $t$---we will leave this detail to the reader and proceed.

In particular, since $$|X_t-X_t^r|\leq r |Z_t|,$$ it follows from \eqref{eq:def.Z} that $\mathcal{AW}_p(P,P^r)\leq r$; thus
\begin{align}
\label{eq:snell.for.P.r}
 \sup_{Q\in B_r(P)} s(Q)
 &\geq \inf_{\tau\in\mathrm{ST}} E_P[ f(X^r,\tau(X^r))]\\
&=E_P[ f(X^r,1)\wedge E_P[f(X^r,2) | \mathcal{F}^X_1]],
\end{align}
where the equality holds by the Snell envelope theorem and since $\mathcal{F}^X_1=\mathcal{F}^{X^r}_1$.

Next note that  
\begin{align*}
f(X^r,1)< E[f(X^r,2)|\mathcal{F}^X_1] 
&\text{ and } f(X,1)< E[f(X,2)|\mathcal{F}^X_1] 
\qquad\text{on }A^r, \\
f(X^r,1)> E[f(X^r,2)|\mathcal{F}^X_1] 
&\text{ and } f(X,1)> E[f(X,2)|\mathcal{F}^X_1] 
\qquad\text{on }B^r, \\
f(X^r,1)\wedge E_P[f(X^r,2) | \mathcal{F}^X_1]
&=f(X,1)\wedge E_P[f(X,2) | \mathcal{F}^X_1]
\qquad\,\,\,\text{on }(A^r\cup B^r)^c\in\mathcal{F}_1^X.
\end{align*}
Combined with \eqref{eq:snell.for.P.r} and since
$$s(P)= E_P [f(X,1)\wedge E [f(X,2)|\F_{1}^X]],$$ we get
\begin{align*}
\sup_{Q\in B_r(P)} s(Q)- s(P)
&\geq E_{P}\big[ (f(X^r,1) -f(X,1)) \mathbf{1}_{A^r} \\
&\qquad+ (E_{P}[f(X^r,2)-f(X,2)|\F_{1}^X])\mathbf{1}_{B^r}] \\
&=E_{P}\big[ (f(X^r,1) -f(X,1)) \mathbf{1}_{A^r} + (f(X^r,2)-f(X,2))\mathbf{1}_{B^r}],
\end{align*}
where the  inequality holds by the tower property.
Using the fundamental theorem of calculus  just as in the proof of Theorem \ref{thm:main} shows that
\begin{align*}
\sup_{Q\in B_r(P)} \frac{1}{r} \big( s(Q) - s(P) \big)
&\geq  \sum_{t=1}^2 E_{P}\big[ \partial_{x_t} f(X,1)Z_t  \mathbf{1}_{A^r} + \partial_{x_t} f(X,2) Z_t  \mathbf{1}_{B^r}\big] - o(1)  \\
&\to  \sum_{t=1}^2 E_{P}\big[ \partial_{x_t} f(X,\tau^\ast) Z_t \big]
\end{align*}
as $r\downarrow0$, where the convergence holds because, by \eqref{eq:property.tau.ast},   $\mathbf{1}_{A_1^r}\to  \mathbf{1}_{\{\tau^\ast=1\}} $ and $\mathbf{1}_{A_2^r}\to \mathbf{1}_{\{\tau^\ast=2\}}$.
To complete the proof, it remains to recall the definition of $Z$, see \eqref{eq:def.Z}.
\qed

\bibliographystyle{plainnat}
\bibliography{paperslib}

\end{document}